\newcommand{\be}{\begin{equation}}
\newcommand{\ee}{\end{equation}}
\newenvironment{equations}{\equation\aligned}{\endaligned\endequation}
\newcommand{\fer}[1]{(\ref{#1})}
\def\ff{\widehat f}
\def\gg{\widehat g}
\newcommand{\R}{\mathbb{R}}
\newtheorem{theorem}{Theorem}
\newtheorem{lemma}{Lemma}
\newtheorem{remark}{Remark}
\newtheorem{definition}{Definition}
\def\blfootnote{\xdef\@thefnmark{}\@footnotetext}
\begin{document}
\title{Large-time behaviour for coupled systems of Lotka-Volterra-type Fokker-Planck equations}

\author{ Giuseppe Toscani \\
		{\small	Department of Mathematics ``F. Casorati''} \\
		{\small University of Pavia, Italy} \\
		{\small\tt giuseppe.toscani@unipv.it} \\
		 Mattia Zanella \\
		{\small	Department of Mathematics ``F. Casorati''} \\
		{\small University of Pavia, Italy} \\
		{\small\tt mattia.zanella@unipv.it} 
		}

\maketitle

\vspace*{-0.8cm}
\begin{abstract}
We study a system of Fokker-Planck equations recently introduced to describe the temporal evolution of statistical distributions of population densities with predator-prey interactions. At the macroscopic level, the system recovers a Lotka-Volterra model and defines an explicit family of equilibrium densities that depend on the form of the diffusion coefficient. By introducing Energy-type distances, we rigorously establish exponential convergence to equilibrium in appropriate homogeneous Sobolev spaces, with a rate explicitly determined by the dissipative contribution of the interaction term. The analysis highlights the intrinsic energy dissipation mechanism governing the dynamics and clarifies how the evolution of expected quantities determines the emergence of a stable equilibrium configuration. This approach provides a new perspective on the convergence to equilibrium for problems with time-dependent coefficients. 
\end{abstract}

\vspace*{0.5cm}
\noindent{\bf Keywords:} Kinetic theory; Lotka--Volterra equations; Fokker--Planck equations; Multiscale modeling; Energy-type distances. 

\vspace*{0.5cm}
\noindent{\bf Mathematics Subject Classification:} 35Q20; 35Q84; 92D25.

\tableofcontents

\section{Introduction}
In this paper, we study convergence to equilibrium of the solution of a system of Fokker--Planck type equations which describes the evolution of the probability densities ${\mathbf f}(x,t) = (f_1(x,t), f_2(x,t))$, $x \in \R_+$,  of an interacting particle system characterized by two groups of agents/particles. {In the present setting $x \in \mathbb{R}_+$ denotes the population density, whose evolution is interpreted in a statistical sense \cite{BMTZ,MTZ}.} In the following, this system will feature predator-prey interactions and such that their mean values ${\mathbf m}(t)= (m_1(t), m_2(t))$, given by
 \be\label{intro:mean}
 m_k(t) = \int_{\R_+} x\, f_k(x,t)\, dx, \qquad k =1,2,
 \ee
 follow a Lotka-Volterra dynamics
 \begin{equation}
\label{intro:LV}
\begin{split}
\dfrac{dm_1}{dt} &= \alpha\left( 1-\dfrac{m_1}{K} \right)m_1  - \beta m_1m_2 \\
\dfrac{dm_2}{dt} &= -\delta m_2 + \gamma m_1 m_2, 
\end{split}
\end{equation} 
{where $\alpha>0$ is the growth rate, $K>0$ the carrying capacity, $\beta>0$ the removal rate due to contact between agents in the first population interacting with agents in the second population, $\delta>0$ a natural decay rate in the second specie, and $\gamma>0$ the growth rate of the second population due to interactions between populations. The system \eqref{intro:LV} corresponds to the classical Lotka-Volterra model with logistic growth \cite{Pearl}}. At the macroscopic level, these interactions shape the mutual evolution of this coupled system of particles. A rich literature  concentrating on species interacting through space-dependent factors have been developed in recent years, see e.g. \cite{AFMS,BE,CHS18,ChaJabRao,chen21,CDJ,ChoLuiYam_Strong,DIEKMANN2005257,HHMM,Yam}. In the direction of interacting systems with cross-diffusion we mention the works \cite{ARSW,BHS,Canizo,CD,RBM_DAUS} and \cite{Jabin_2017,Jabin_11,Pouchol01012018} for applications to structured populations. A kinetic approach to such-dynamics have been proposed in \cite{BMTZ,TosZan} where a multi-scale perspective connecting kinetic theory with classical models of population dynamics have been proposed. In particular, in these works a system of mean-field equations have been derived from a microscopic dynamics through the approaches defined in \cite{APZ,DPTZ,ParTos-2013}.  The  system of Fokker--Planck type equations derived in \cite{BMTZ} reads
\begin{equation}
\label{intro:systFP}
\begin{split}
\dfrac{\partial}{\partial t} f_1(x,t) =& \dfrac{\partial}{\partial x} J_1(f_1(t),t)= \dfrac{\sigma_1^2(t)}{2} \dfrac{\partial^2}{\partial x^2} (x^{2p} f_1(x,t)) + 
 \dfrac{\partial}{\partial x} \left[ \left( \lambda_1(t)x  - \mu_1(t)\right) f_1(x,t)\right] \\
\dfrac{\partial}{\partial t} f_2(x,t) =& \dfrac{\partial}{\partial x} J_2(f_2(t),t)= \dfrac{\sigma_2^2(t)}{2} \dfrac{\partial^2}{\partial x^2} (x^{2p} f_2(x,t)) + 
 \dfrac{\partial}{\partial x} \left[ \left( \lambda_2(t)x  - \mu_2(t)\right) f_2(x,t)\right]. 
\end{split}
\end{equation}
In this system the constant $1/2 \le p \le 1$ measures the intensity of random effects, while the coefficients of diffusion and drift are functions of time through the mean values of the solution, as given by \fer{intro:mean}
\be\label{intro:coeff}
\sigma_k(t) = \sigma_k[{\mathbf m}(t)], \quad {\lambda_k}(t) = \lambda_k[{\mathbf m}(t)], \quad \mu_k(t) = \mu_k[{\mathbf m}(t)]; \quad k =1,2,
\ee
characteristics which renders the study of the large--time behaviour of the solutions a challenging problem since the evolution of the mean values is non-monotone in time, in contrast to classical approaches in kinetic theory \cite{Arnold,CerIllPul}. {We recall that the bound $1/2\le p\le 1$ is linked to the integrability of equilibrium densities. } System \eqref{intro:systFP} is supplemented with no-flux boundary conditions that guarantee the conservation of total mass.

Relaxation problems for mean-field equations with time-dependent coefficient have been also considered in \cite{ATZ,ACTZ}. 
Nevertheless, the analysis discussed in \cite{ATZ} has been made possible thanks to the explicit parametrisation of the quasi-equilibrium density which enabled the application of classical entropy methods, see e.g. \cite{MR3597010,FPTT19,Bris03072008} for related approaches. 
We recall that, given a Fokker--Planck equation of type \eqref{intro:systFP}, namely
\be\label{intro:qs}
\frac{\partial}{\partial t} f(x,t) = \frac{\partial}{\partial x} J(f(x,t),t),
\ee
the \emph{quasi-equilibrium distribution} is the time-dependent probability density 
\( f^q(x,t) \), \( x \in \mathbb{R}_+ \), satisfying the first-order equation 
\( J(f^q(x,t),t) = 0 \).
When considering system \eqref{intro:systFP}, it can be shown that the quasi-equilibria
\[
\mathbf{f}^q(x,t) = \big(f_1^q(x,t), f_2^q(x,t)\big), \qquad x \in \mathbb{R}_+,
\]
are given by generalized Gamma distributions~\cite{Sta}, whose form depends critically on 
the parameter \( p \) that characterizes the \( x \)-dependent diffusion coefficient.
In particular, the limiting cases \( p = \tfrac{1}{2} \) and \( p = 1 \) correspond, respectively,
to Gamma and inverse-Gamma quasi-equilibrium densities~\cite{BMTZ}. Systems of Fokker--Planck type equations with mean values satisfying  the classical Lotka--Volterra systems have been originally proposed in \cite{TosZan} to model the distributions of the populations sizes of both preys and predators in presence of random effects.  The system introduced in \cite{TosZan} has been subsequently studied in details in \cite{BMTZ}, where however the description of the large--time behavior of the solutions has been left open. The behavior of a similar system has been subsequently investigated in \cite{MT}  motivated by the study of the evolution of the inequality expressed by the probability densities for two populations interacting pairwise by economic motivations.  

The choice of the logistic case is motivated by the fact that, when 
\( { (\gamma K - \delta) > 0} \), the solution \( \mathbf{m}(t) \) converges to the unique equilibrium point
\begin{equation}
\label{intro:equilibrium_mean}
\mathbf{m}^\infty = \left( \frac{\delta}{\gamma}, \frac{\alpha(\gamma K - \delta)}{\beta \gamma K} \right).
\end{equation}
The convergence of the mean values towards \( \mathbf{m}^\infty \) implies the convergence 
of the time-dependent coefficients \eqref{intro:coeff} towards their limiting values 
\( \sigma_k(\mathbf{m}^\infty) \), \( \lambda_k(\mathbf{m}^\infty) \), and 
\( \mu_k(\mathbf{m}^\infty) \), \( k = 1,2 \). 
Consequently, the quasi-equilibria \( f_k^q(x,t) \), \( k = 1,2 \), converge towards the 
corresponding generalized Gamma densities, denoted by \( f_k^\infty(x) \), \( k = 1,2 \). 
In this setting, it is reasonable to conjecture that the solutions 
\( \mathbf{f}(x,t) \) converge, in a suitable sense, towards the equilibrium density 
\( \mathbf{f}^\infty(x) = (f_1^\infty(x), f_2^\infty(x)) \).

In a recent works~\cite{LTZ,MTZ}, the large-time behaviour of a system of Fokker--Planck type equations with time-dependent diffusion coefficients has been investigated by means of \emph{Energy distances}, a well-known class of measures of discrepancy between probability distributions, widely used in Statistics~\cite{SR} and Artificial Intelligence~\cite{Belle}. 
This analysis suggests that such (weak) distances are particularly well suited for studying the convergence properties of kinetic systems with time-varying parameters. Energy distances, introduced by Székely (cf.~\cite{Sze89, Sze03, SR}), can be regarded as a natural generalization of Cramér’s distance~\cite{Cra}, also known as the \emph{continuous ranked probability score}. 
They represent a valid alternative to other well-known metrics, such as the Wasserstein distance~\cite{villani2009optimal} and the Fourier-based metrics introduced in~\cite{GTW}, which have been extensively employed in kinetic theory (see~\cite{ABGT2} for a recent overview of their properties and mutual relationships).
A rigorous validation of the suitability of Energy distances for studying convergence to equilibrium in linear kinetic equations was presented in~\cite{AT}. 
There, it was shown that these distances provide an effective tool to quantify the rate of convergence to equilibrium for solutions of linear diffusion equations and Fokker--Planck type equations with linear drift, including the one-dimensional socio-economic models analysed in~\cite{MR3597010}. 
In all considered cases, the solutions exhibit exponential decay in time towards equilibrium.

In this paper, we show that Energy distances provide an effective quantitative framework for analysing the convergence to equilibrium of the solutions to systems of the type introduced in~\eqref{intro:systFP}. 
In particular, in the limiting cases \( p = \tfrac{1}{2} \), yielding equilibria of Gamma type, and \( p = 1 \), corresponding to inverse Gamma equilibria, the structure of the diffusion operator allows the use of Fourier-based techniques to obtain  decay estimates in a wide class of Energy distances. 
This approach leads to a rigorous characterisation of the rate of convergence towards equilibrium, highlighting the dissipative nature of the dynamics and the dependence of the asymptotic behaviour on the underlying parameters.

In more detail, in Section \ref{sect:2} we introduce the Energy distance and we highlight its connection with homogeneous Sobolev spaces of fractional order. In Section \ref{sect:3} we introduce the system of Fokker-Planck equations with time-dependent coefficients recalling the evolution of macroscopic quantities like mean and variance and we briefly study the convergence to equilibrium for the resulting systems of moment equations. In Section \ref{sect:4} we discuss the general problem of convergence to equilibrium of the solutions to the Fokker--Planck system in the full range of $p \in [\frac 1 2, 1]$, with a detailed study of the cases $p = 1/2$ and $p = 1$,   reckoning the rate of convergence to the equilibrium distributions.  Several numerical results will be presented in Section \ref{sect:5} to highlight the effectiveness of the theoretical results. 

\section{The Energy distance}\label{sect:2}
In this section we introduce the main tools that will be considered for the study of equilibration of the system of Fokker-Planck equations \eqref{intro:systFP}.  Let $f(x)\ge 0$, $x \in \R_+$ be a probability density, i.e. 
\[
\int_{\R_+} f(x) \,dx =1,
\]
and let $\hat f$ denote its Fourier transform 
\[
\hat f(\xi) = \int_{\mathbb R_+} f(x)e^{-i\xi x}dx. 
\]
We will associate $f$ to a random variable $X$, such that for a subset of the reals $A \subseteq \R$, we have 
\[
F(X\in A) = \int_A f(x)\, dx.
\]
The cumulative distribution function of $X$ is then
\be\label{cumu}
F(x) = P(X \le x) = \int_{0}^x f(x)\, dx.
\ee
Given $r>0$, we denote by $P_r(\R_+)$ the class of all probability densities $f$ over $\R_+$ such that
\[
m^{(r)}  = \int_{\R_+} x^r \,f(x)\, dx < + \infty.
\]
 If $r=1$, $ m^{(1)}= m$ defines the mean value of the random variable $X$ and the variance is defined as follows 
 \[
V = m^{(2)} - m^2. 
 \]
 
 \begin{definition}
Given two random variables $X$ and $Y$ of probability densities $f(x)$ and  $g(x)$ respectively, $x\in \R_+$, the Energy distance of order $r$, with $0<r<2$  between  $X$ and $Y$ is defined as follows 
\begin{equations}\label{energy-a}
\mathbf{E}_r (f,g) &= 2\int_{\R_+^2}|x-y|^rf(x)g(y) \, dx dy\\
& -\int_{\R_+^2}|x-y|^rf(x)f(y) \, dx dy
 -\int_{\R_+^2}|x-y|^r g(x)g(y) \, dx dy\\
&= - \int_{\R_+^2}|x-y|^r [f(x)-g(x)][f(y) -g(y)] \, dx dy.
\end{equations} 
\end{definition}

We observe that the introduced distance can be equivalently expressed in Fourier transform as follow 
\be\label{energy2}
\mathbf{E}_r (f,g) = c_{r} \int_{\R} \frac{ |\ff(\xi) -\gg(\xi)|^2}{|\xi|^{1+r}}\, d\xi,
\ee
where
\be\label{c1}
c_{r} =  \frac{r\,\Gamma\left( \frac{1+r}2\right)}{  2^{1-r}\sqrt\pi\, \Gamma\left( \frac{2-r}2\right)}, 
\ee
see \cite{ABGT2,SR}.  Thanks to the reformulation of this distance in terms of Fourier transformed densities we may easily verify that right-hand side in \fer{energy-a} is nonnegative, so that the mixed integral dominates the sum of the last two. Furthermore, expression \fer{energy2} connects the Energy distance with norms arising in the study of homogeneous Sobolev spaces of fractional order $\dot H_{-r}$, being
\be\label{hq}
\| f\|_{\dot H_{-r}} = \int_\R |\xi|^{-2r}|\hat f(\xi)|^2 \, d\xi,
\ee
see \cite{BG}. We recall that, if $r=1$, we have
\be\label{en-cra}
\mathbf{E}_1 (f,g) = \frac 1{\pi} \int_{\R} \frac{ |\ff(\xi) -\gg(\xi)|^2}{|\xi|^{2}}\, d\xi,
\ee
which corresponds to twice the Cram\'er distance, classically defined as the square of the $L^2$-norm of the difference between the cumulative functions of $X$ and $Y$, that is
\be\label{Cramer}
d(f,g) = \int_\R[F(x) -G(x)]^2\, dx.
\ee 
Indeed, for any given pair of probability densities $f,g \in P_1(\R)$,  Parseval formula implies
\be\label{Par}
\int_\R [F(x) - G(x)]^2 \, dx = \frac 1{2\pi} \int_\R |\widehat F(\xi) - \widehat G(\xi)|^2 \, d\xi,
\ee
where $\widehat F$ and $ \widehat G$ are the Fourier transforms of the distribution functions $F,G$. If
$\ff(\xi)$ and $ \gg(\xi)$ are the Fourier transforms of $f$ and $g$
 integration by parts gives 
 \[
 \widehat F(\xi) - \widehat G(\xi) = \frac{\ff(\xi) -\gg(\xi)}{i\xi}.
 \]
  Consequently, the Cram\'er distance defined in \fer{Cramer} can be equivalently written as
 \be\label{Cramer2}
d(F,G)  = \frac 1{2\pi} \int_\R \frac{|\ff(\xi) -\gg(\xi)|^2}{|\xi|^2} \, d\xi.
\ee
To avoid inessential computations, in the rest of the paper we will simply refer to a \emph{normalized} energy distance of order $\ell$,  $\mathcal E_\ell$, which is obtained by setting $c_r =1$ and $\ell=(1+r)/2$, with $1/2 <\ell<3/2$, i.e.
\be\label{eq:norm}
\mathcal{E}_\ell (f,g) =  \int_{\R} \frac{ |\ff(\xi) -\gg(\xi)|^2}{|\xi|^{2\ell}}\, d\xi.
\ee 
\begin{remark}
The normalized Energy distance defined in \fer{eq:norm}, while maintaining connection with the widely used expression \fer{energy2} introduced by Sz\'ekely in \cite{Sze89}, is such that its square root is nothing but the homogeneous Sobolev space of fractional order with negative index $\dot{H}_{-\ell}$. Thus, the square root of the normalized Cram\'er distance coincides with $\dot{H}_{-1}$.
\end{remark}
It is interesting to observe that, for any given $\ell<\ell^*$, the Energy distance of order $\ell^*$ controls the energy distance of order $\ell$. More precisely, the following quantitative result holds 
\begin{lemma}\label{lemma1}
Let $f(x),g(x)$ be two probability densities, $x \in \mathbb R_+$, and let $\ell,\ell^* \in \left(\frac12,\frac32\right)$ such that $\ell<\ell^*$. If $\mathcal{E}_\ell$ is the energy distance of order $\ell$ defined in \fer{eq:norm} it holds
\be\label{eq:scala}
\mathcal E_\ell(f,g)^{2\ell^*-1} \le C_{\ell,\ell^*} \mathcal E_{\ell^*}(f,g)^{2\ell-1},
\ee
where
\be\label{cab}
C_{\ell,\ell^*}= \left(\frac4{\ell^*-\ell}\right)^{2(\ell^*-\ell)} \left(\dfrac{2\ell^*-1}{2\ell-1} \right)^{2\ell-1} 
\ee
\end{lemma}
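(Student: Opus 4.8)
\medskip
\noindent\emph{Proof proposal.} The plan is to obtain \eqref{eq:scala} by a frequency-splitting interpolation argument between the two negative-order homogeneous Sobolev norms associated with $\mathcal E_\ell$ and $\mathcal E_{\ell^*}$, exploiting the fact that the characteristic functions of probability densities are bounded by one. First I would discard the trivial case: if $\mathcal E_{\ell^*}(f,g)=+\infty$ the right-hand side of \eqref{eq:scala} is infinite and there is nothing to prove, so assume henceforth $\mathcal E_{\ell^*}(f,g)<+\infty$. Set $h(\xi):=|\ff(\xi)-\gg(\xi)|^2$ and record the two elementary facts to be used: $0\le h(\xi)\le 4$ for every $\xi\in\R$ (since $|\ff(\xi)|,|\gg(\xi)|\le 1$) and $h(0)=0$ (since $\ff(0)=\gg(0)=1$).

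For an arbitrary cutoff $R>0$ I would then split, using \eqref{eq:norm},
\be\label{eq:plan-split}
\mathcal E_\ell(f,g)=\int_{\{|\xi|\le R\}}\frac{h(\xi)}{|\xi|^{2\ell}}\,d\xi+\int_{\{|\xi|> R\}}\frac{h(\xi)}{|\xi|^{2\ell}}\,d\xi ,
\ee
and estimate the two contributions in \eqref{eq:plan-split} separately. On the low-frequency part, writing $|\xi|^{-2\ell}=|\xi|^{-2\ell^*}|\xi|^{2(\ell^*-\ell)}$ and using $\ell^*>\ell$ to bound $|\xi|^{2(\ell^*-\ell)}\le R^{2(\ell^*-\ell)}$ on $\{|\xi|\le R\}$, one gets $\int_{\{|\xi|\le R\}}|\xi|^{-2\ell}h\,d\xi\le R^{2(\ell^*-\ell)}\,\mathcal E_{\ell^*}(f,g)$. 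On the high-frequency part, using $h\le 4$ together with the convergence of $\int_{\{|\xi|>R\}}|\xi|^{-2\ell}\,d\xi$ — which holds precisely because $\ell>\tfrac12$, i.e. $2\ell>1$ — one gets $\int_{\{|\xi|> R\}}|\xi|^{-2\ell}h\,d\xi\le \tfrac{8}{2\ell-1}\,R^{1-2\ell}$. Hence, for every $R>0$,
\[
\mathcal E_\ell(f,g)\le R^{2(\ell^*-\ell)}\,\mathcal E_{\ell^*}(f,g)+\frac{8}{2\ell-1}\,R^{1-2\ell},
\]
which in particular already shows $\mathcal E_\ell(f,g)<+\infty$.

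The last step is to optimise this bound over $R>0$. Its right-hand side is of the form $AR^{p}+BR^{-q}$ with $A=\mathcal E_{\ell^*}(f,g)$, $p=2(\ell^*-\ell)>0$, $q=2\ell-1>0$, $B=\tfrac{8}{2\ell-1}$, and $p+q=2\ell^*-1$; elementary calculus gives the minimiser $R_\star$ determined by $R_\star^{\,p+q}=\tfrac{qB}{pA}=\tfrac{8}{pA}$ and the minimal value $\tfrac{p+q}{q}\big(\tfrac{8}{p}\big)^{p/(p+q)}A^{q/(p+q)}$. Substituting $A=\mathcal E_{\ell^*}(f,g)$, raising the resulting inequality to the power $p+q=2\ell^*-1$, and rewriting $\tfrac{8}{p}=\tfrac{4}{\ell^*-\ell}$ and $\tfrac{p+q}{q}=\tfrac{2\ell^*-1}{2\ell-1}$, one arrives at \eqref{eq:scala} with $C_{\ell,\ell^*}$ emerging in the explicit product form \eqref{cab}.

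The argument is entirely elementary, and the only step that requires real care is the last one: the bookkeeping of exponents and constants in the minimisation, so that the final constant collapses to precisely the closed form \eqref{cab}. One should also check that the two standing hypotheses are invoked exactly where they are needed — $\ell>\tfrac12$ for the convergence of the high-frequency tail integral, and $\ell<\ell^*$ for the monotonicity that makes the low-frequency comparison of the weights $|\xi|^{-2\ell}$ and $|\xi|^{-2\ell^*}$ legitimate. Since $\mathcal E_\ell$ and $\mathcal E_{\ell^*}$ are $[0,+\infty]$-valued, no further well-posedness discussion is needed beyond the trivial reduction made at the outset.
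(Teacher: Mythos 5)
Your proof follows exactly the paper's argument: the same frequency splitting at a radius $R$, the same two bounds ($|\ff(\xi)-\gg(\xi)|^2\le 4$ on the tail and $|\xi|^{-2\ell}\le R^{2(\ell^*-\ell)}|\xi|^{-2\ell^*}$ on the core), and the same optimisation over $R$ — your $R_\star$ coincides with the paper's $R^*$. One caveat on the final bookkeeping, which you yourself flag as the delicate step: carrying the minimisation through exactly gives the constant $\left(\frac{4}{\ell^*-\ell}\right)^{2(\ell^*-\ell)}\left(\frac{2\ell^*-1}{2\ell-1}\right)^{2\ell^*-1}$, i.e.\ the second factor carries the exponent $p+q=2\ell^*-1$ rather than the $2\ell-1$ written in \fer{cab}, so the computation does not "collapse to precisely" \fer{cab} as you assert; this appears to be a typo in the paper's stated constant rather than a flaw in your argument, and is immaterial to how the lemma is used.
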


\begin{proof}
Consider that, for given $\ell<\ell^*$, and any $R>0$ the following inequalities hold
\begin{equations}
& \int_{\R} \frac{ |\ff(\xi) -\gg(\xi)|^2}{|\xi|^{2\ell}}\, d\xi = \int_{|\xi|\le R} \frac{ |\ff(\xi) -\gg(\xi)|^2}{|\xi|^{2\ell}}\, d\xi + \int_{|\xi|>R} \frac{ |\ff(\xi) -\gg(\xi)|^2}{|\xi|^{2\ell}}\, d\xi \\
&\le R^{2(\ell^*-\ell)} \int_{|\xi|\le R} \frac{ |\ff(\xi) -\gg(\xi)|^2}{|\xi|^{2\ell^*}}\, d\xi  + 4 \int_{|\xi|>R} \frac{ 1}{|\xi|^{2\ell}}\, d\xi
\end{equations}
since for any probability density $f(x,t)$ we have
\[
 |\hat f(\xi,t)| =  \left|\int_{\mathbb R_+}e^{-i\xi x} f(x,t)\right|dx\le\int_{\mathbb R_+} |f(x,t)|dx = 1. 
\]
Therefore, we get
\begin{equation}
\begin{split}
 &R^{2(\ell^*-\ell)} \int_{|\xi|\le R} \frac{ |\ff(\xi) -\gg(\xi)|^2}{|\xi|^{2\ell^*}}\, d\xi  + 4 \int_{|\xi|>R} \frac{ 1}{|\xi|^{2\ell}}\, d\xi \le \\
 &\qquad R^{2(\ell^*-\ell)} \int_{\R} \frac{ |\ff(\xi) -\gg(\xi)|^2}{|\xi|^{2\ell^*}}\, d\xi  + \frac 8{(2\ell-1) R^{2\ell-1}}, 
\end{split}\end{equation}
and setting 
\be\label{eq:opt}
h(R) = R^{2(\ell^*-\ell)} \int_{\R} \frac{ |\ff(\xi) -\gg(\xi)|^2}{|\xi|^{2\ell^*}}\, d\xi  + \frac 8{(2\ell-1) R^{2\ell-1}}, 
\ee
we find the optimal value
\[
R^* = \left(\dfrac{4}{(\ell^*-\ell) \displaystyle\int_{\mathbb R} \dfrac{|\hat f(\xi) - \hat g(\xi)|^2}{|\xi|^{2\ell^*}}d\xi} \right)^{1/(2\ell^*-1)},
\]
Substituting this value into \fer{eq:opt} gives \fer{eq:scala}.
\end{proof}

Consequently, once we know that a sequence of probability densities $f_n$, $n\ge 1$, converges to a density $f$ in the energy distance of order $\ell^*$, we can conclude that convergence of the sequence holds in the energy distance of order $\ell$ for any value $\ell<\ell^*$.

\section{Lotka-Volterra-type kinetic equations}\label{sect:3}

The system \fer{intro:systFP} of Fokker--Planck equations was introduced in \cite{TosZan} to describe, in a simplified situation, the time evolution of the size distribution of two populations governed by predator--prey interactions. In \cite{TosZan} (cf. also \cite{BMTZ}) the evolution in time of the population's sizes $f_1(x,t)$ and $f_2(x,t)$ has been shown to obey in general to a system of  Boltzmann-type equations, in which  the dynamics are provided by binary interactions between  predator and prey populations, while births and deaths are managed by a linear distribution operator, originally introduced in \cite{BisSpiTos} to model wealth taxation in a society of agents. In a suitable \emph{grazing} scaling regime,  the Boltzmann formulation is close to a system of coupled Fokker--Planck-type equations for the probability densities  representing the statistical distributions of preys, and, respectively, of predators.

 In detail, let $f_1(x,t), f_2(x,t)$, $(x,t) \in \mathbb R_+^2$,  denote the probability density functions of two types of agents/particles, typically referred to as preys and predators, respectively. Accordingly, {$f_k(x,t)dx, \,\, k=1,2$} quantifies the fraction  of group {$k=1,2$} with size in $[x,x+dx]$ at time $t\ge0$. Moreover, let $m_1(t),m_2(t)$, as given by \fer{intro:mean},  the mean values of the populations. 
 
 \subsection{Lotka-Volterra-type Fokker-Planck equations }

 The evolution in time of the mean values of the biological system admits a hierarchical finer description. Building on the ideas in \cite{BMTZ,TosZan}, one can define a mass-preserving non-Maxwellian system of kinetic equations for the probability densites of a mixture of particles such that, at the macroscopic level, the mean values obey to system \fer{intro:LV}, which corresponds to the classical Lotka-Volterra equation with logistic growth 
 \begin{equation}
\label{eq:LV}
\begin{split}
\dfrac{dm_1(t)}{dt} &= \alpha\left( 1-\dfrac{m_1(t)}{K} \right)m_1(t)  - \beta m_1(t)m_2(t) \\
\dfrac{dm_2(t)}{dt} &= -\delta m_2(t) + \gamma m_1(t) m_2(t). 
\end{split}
\end{equation} 
In \eqref{eq:LV} the coefficients $\alpha>0$ indicate the growth rate, $K>0$ a carrying capacity of the system, $\beta>0$ the removal rate due to contact between particles in the first population interacting with particles in the second population, $\delta>0$ a natural decay rate of the second specie, and $\gamma>0$ the growth rate of the second population due to interacting between the two populations.  As briefly described in the Introduction, the unique coexistence equilibrium point for the system of the mean values is 
\begin{equation}
\label{eq:equilibrium_mean}
\mathbf m^\infty = \left( \dfrac{\delta}{\gamma},\dfrac{\alpha(\gamma K - \delta)}{\beta \gamma K}\right), 
\end{equation}
which is meaningful under the hypothesis that {$\gamma K-\delta>0$}. At variance with a purely microscopic approach, the new system of kinetic equations is able to propagate information on the whole moment system of the kinetic distributions $f_1,f_2$ {provided by the quantities
\begin{equation}\label{eq:mom2}
m_k^{(\ell)} = \int_{\mathbb R_+}x^\ell f_k(x,t)dx, \qquad \ell>0, k=1,2 
\end{equation}}
Nevertheless, as investigated in \cite{BMTZ}, this systems become rapidly unfeasible to direct inspections since nonlinear terms appears starting from the evolution of the second order moment, {i.e. $\ell = 2$ in \eqref{eq:mom2}}. For those reasons, a reduced complexity system of equations have been proposed in \cite{BMTZ,TosZan} building on quasi-invariant limit methods for collisional kinetic equations, see \cite{ParTos-2013} for a comprehensive introduction. 

In a suitable grazing limit, from the bilinear Boltzmann-type description for the evolution of $f_1,f_2$  one obtains a reduced complexity system of Fokker-Planck equations, in which the detailed bilinear interactions are substituted by mean values. As shown in \cite{BMTZ} the reduced system is of type \fer{intro:systFP}, where
\begin{equations}\label{eq:values}
& \sigma_1^2(t) = \sigma_1^2(m_1(t) + m_2(t)) \quad \lambda_1(t) =   \beta m_2(t)  + \dfrac{\alpha}{K}m_1(t) + \alpha \chi, \quad \mu_1(t) =\alpha(\chi + 1)m_1(t),  \\
& \sigma_2^2(t) = \sigma_2^2m_1(t), \quad \lambda_2(t) = \gamma(\mu-m_1(t)) + \nu\theta, \quad \mu_2(t) = \nu(\theta+1)m_2(t).
\end{equations}
In \fer{eq:values} the positive constants  $\sigma_1^2, \sigma_2^2>0$ measure the strength of the diffusion, while the $\theta,\chi>-1$ and $\nu>0$ are related to the rate of birth and death of particles. {Furthermore, according to \cite{BMTZ} the parameter $\delta>0$ is such that $\delta = \gamma\mu-\nu$.} For large times we have
\begin{equations}\label{eq:values_large}
& \sigma_{1,\infty}^2= \sigma_1^2(m_1^\infty + m_2^\infty) \quad \lambda_{1,\infty} =   \beta m_2^\infty  + \dfrac{\alpha}{K}m_1^\infty + \alpha \chi, \quad \mu_1^\infty =\alpha(\chi + 1)m_1^\infty,  \\
& \sigma_{2,\infty}^2 = \sigma_2^2m_1^\infty, \quad \lambda_{2,\infty} = \gamma(\mu-m_1^\infty) + \nu\theta, \quad \mu_{2,\infty} = \nu(\theta+1)m_2^\infty.
\end{equations}

\subsection{Evolution of observable quantities for the mean-field model}
\label{subsect:varFP}

The principal moments of the probability densities can be fruitfully evaluated by resorting to the weak form of the coupled system of Fokker-Planck equations \eqref{intro:systFP}. The weak form corresponds to
\begin{equation}
\label{eq:systFP_weak}
\begin{split}
\dfrac{d}{dt} \int_{\mathbb R_+}\varphi(x)f_1(x,t)dx &= \dfrac{\sigma_1^2(t)}2 \int_{\mathbb R_+}\varphi^{\prime\prime}(x)\left(x^{2p}f_1(x,t)\right)dx  - \\
&\qquad \int_{\mathbb R_+}\varphi^{\prime}(x)\left[\lambda_1(t)x - \mu_1(t) \right]f_1(x,t)dx \\
\dfrac{d}{dt} \int_{\mathbb R_+}\varphi(x)f_2(x,t)dx &=  \dfrac{\sigma_2^2(t)}2 \int_{\mathbb R_+}\varphi^{\prime\prime}(x)\left(x^{2p}f_2(x,t)\right)dx - \\  &\qquad \int_{\mathbb R_+}\varphi^{\prime}(x)\left[\lambda_2(t)x - \mu_2(t) \right]f_2(x,t)dx,
\end{split}
\end{equation}
for any given smooth test function $\varphi(\cdot)$. 

As early mentioned, the evolution of mean values follow the classical Lotka-Volterra system with logistic growth in \eqref{eq:LV}. The evolution of variances
\[
V_1(t) = \int_{\mathbb R_+}(x-m_1)^2 f_1(x,t)dx, \qquad V_2(t) = \int_{\mathbb R_+}(x-m_2)^2 f_2(x,t)dx, 
\]
can be obtained from \eqref{eq:systFP_weak} and, for any $p\in [1/2,1]$ reads
\begin{equation*}
\begin{split}
\dfrac{d}{dt}V_1(t) &= {\sigma_1^2}(m_1+m_2)m_1^{{(2p)}} - 2\left( \beta m_2 + \dfrac{\alpha}{K}m_1+\alpha\chi\right)V_1, \\
\dfrac{d}{dt}V_2(t) &=\sigma_2^2 m_1 m^{{(2p)}}_2 - 2(\gamma(\mu-m_1) + \nu\theta)V_2, 
\end{split}
\end{equation*}
where $m_1^{(2p)}(t) , m_2^{(2p)}(t)$ are the moments of order $2p \in [1,2]$, namely
\be\label{eq:mom}
m_k^{(2p)}(t) = \int_{\R_+} x^{2p} f_k(x,t) \, dx, \qquad k =1,2.
\ee
Whenever $p = 1/2$. or $p=1$ we obtain an explicit expression for the evolution. If $p = 1/2$ we get
\begin{equation}
\label{eq:var_p12}
\begin{split}
\dfrac{d}{dt}V_1(t) &= {-2\left( \beta m_2(t) + \dfrac{\alpha}{K}m_1(t) + \alpha\chi\right)V_1(t) + \sigma_1^2(m_1(t) + m_2(t))m_1(t)} \\
\dfrac{d}{dt}V_2(t) &=- 2(\gamma(\mu-m_1(t)) + \nu\theta)V_2(t) + \sigma_2^2 m_1(t) m_2(t) , 
\end{split}
\end{equation}
which, provided $\alpha(1+\chi)>0$ and $\gamma\mu - \delta + \nu\theta>0$, admits the stationary value 
\begin{equation}
\label{eq:Vequil_p12}
\mathbf V_{(p = 1/2)} = \left(\dfrac{\sigma_1^2 m_1^\infty(m_1^\infty + m_2^\infty)}{2\alpha(\chi + 1)}, \dfrac{\sigma_2^2 m_1^\infty m_2^\infty}{2(\gamma\mu-\delta + \nu\theta)} \right), 
\end{equation}
{where $m_1^\infty,m_2^\infty$ have been defined in \eqref{eq:equilibrium_mean}.}
On the other hand, if $p = 1$ we obtain
\begin{equation}
\label{eq:var_p1}
\begin{split}
\dfrac{d}{dt}V_1(t) &= \left[\sigma_1^2(m_1(t)+m_2(t)) - 2\left(\beta m_2(t) + \dfrac{\alpha}{K}m_1(t) + \alpha \chi \right)\right]V_1(t)  + \sigma_1^2(m_1(t) + m_2(t) )m_1^2(t) \\
\dfrac{d}{dt}V_2(t) &=\left[\sigma_2^2 m_1(t) - 2(\gamma(\mu-m_1(t)) + \nu\theta)\right]V_2(t) + \sigma_2^2 m_1(t) m_2^2(t).
\end{split}
\end{equation}
In this case one has the stationary values
\begin{equation}
\label{eq:Vequil_p1}
\mathbf V_{(p = 1)} = \left( \dfrac{\sigma_1^2(m_1^\infty + m_2^\infty)(m_1^\infty)^2}{2\alpha(\chi+1) - \sigma_1^2(m_1^\infty + m_2^\infty)}, \dfrac{\sigma_2^2 m_1^\infty (m_2^\infty)^2}{2(\gamma\mu - \delta +\nu\theta) - \sigma_2^2m_1^\infty}\right).
\end{equation}

These values are compatible with the coexistence equilibrium \fer{eq:equilibrium_mean}  provided 
\[
2(\beta m_2^\infty + \frac{\alpha}{K} m_1^\infty + \alpha \chi) - \sigma_1^2(m_1^\infty + m_2^\infty)>0, \qquad 2(\gamma(\mu - m_1^\infty) + \nu\theta) - \sigma_2^2 m_1^\infty>0,
\]
{which, taking into account the equilibrium values \eqref{eq:equilibrium_mean}, imply a bound from above on the diffusion coefficients given by
\[
\sigma_1^2< \dfrac{\delta\beta K + \alpha(\gamma K - \delta)}{2\alpha\beta\gamma K(\chi+1)}, \qquad \sigma_2^2 <\dfrac{2\gamma(\gamma\mu-\delta + \nu\theta)}{\delta}.
\]
We may observe that, under the hypotheses discussed in \cite{BMTZ}, the upper bound for $\sigma_1^2$ is always positive while the one connected to $\sigma_2^2$ is positive under the identification $\delta = \gamma\mu-\nu$.} These explicit results allow to compare the resulting stationary variances, and verify the impact of the parameter $p$. In Figure \ref{fig:mv} we represent the evolution of mean and variance from \eqref{eq:LV}-\eqref{eq:var_p12}-\eqref{eq:var_p1} for $p = 1/2$ and $p = 1$. As expected, we may observe how the dynamics of the macroscopic quantities lead to the emergence of a stable configuration of expected values and variances. 

\begin{figure}
\centering
\includegraphics[scale = 0.24]{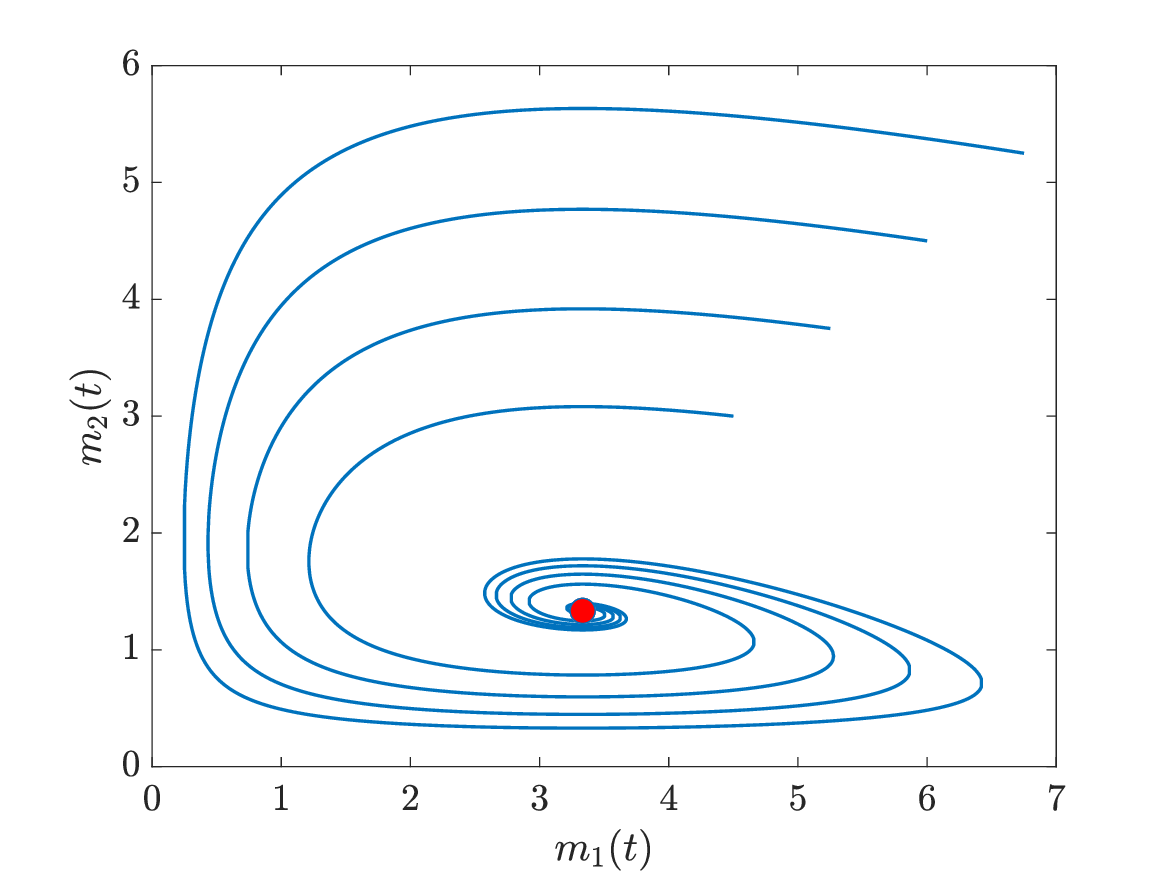}
\includegraphics[scale = 0.24]{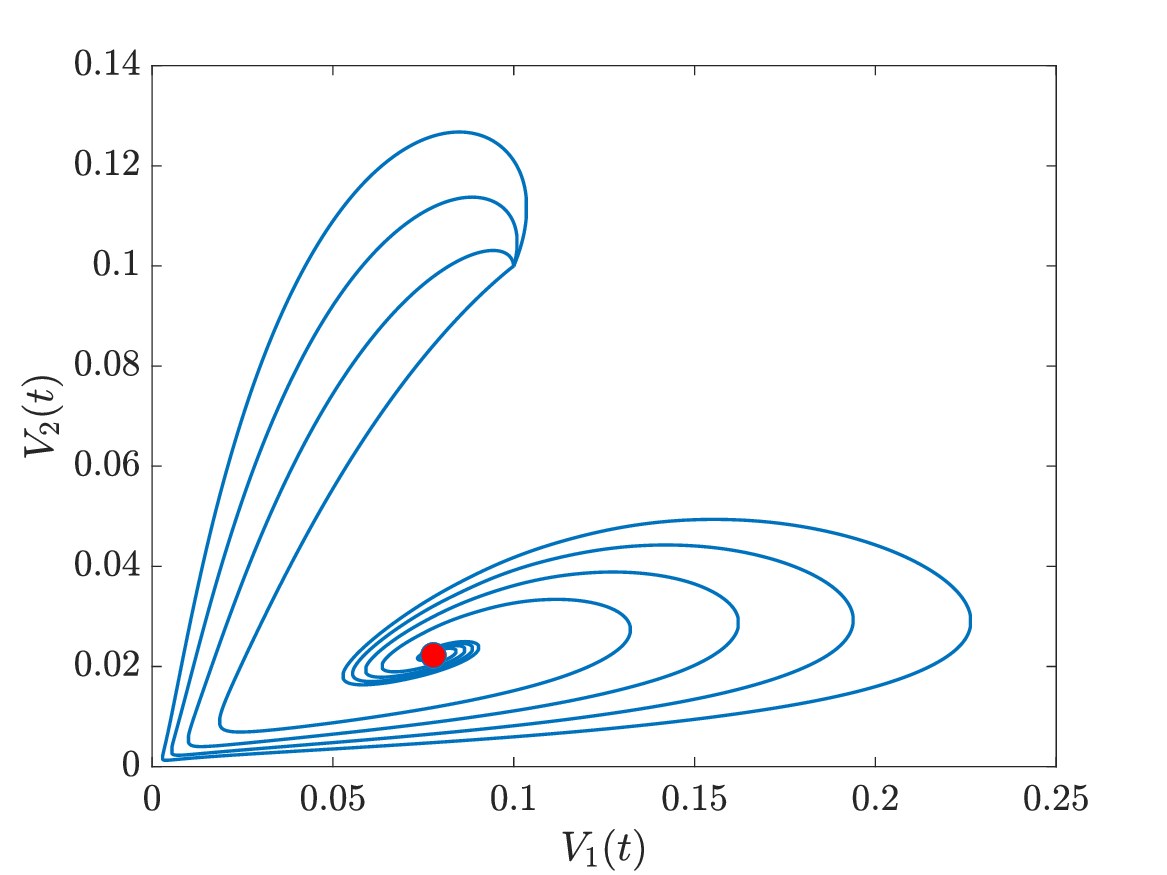}
\includegraphics[scale = 0.24]{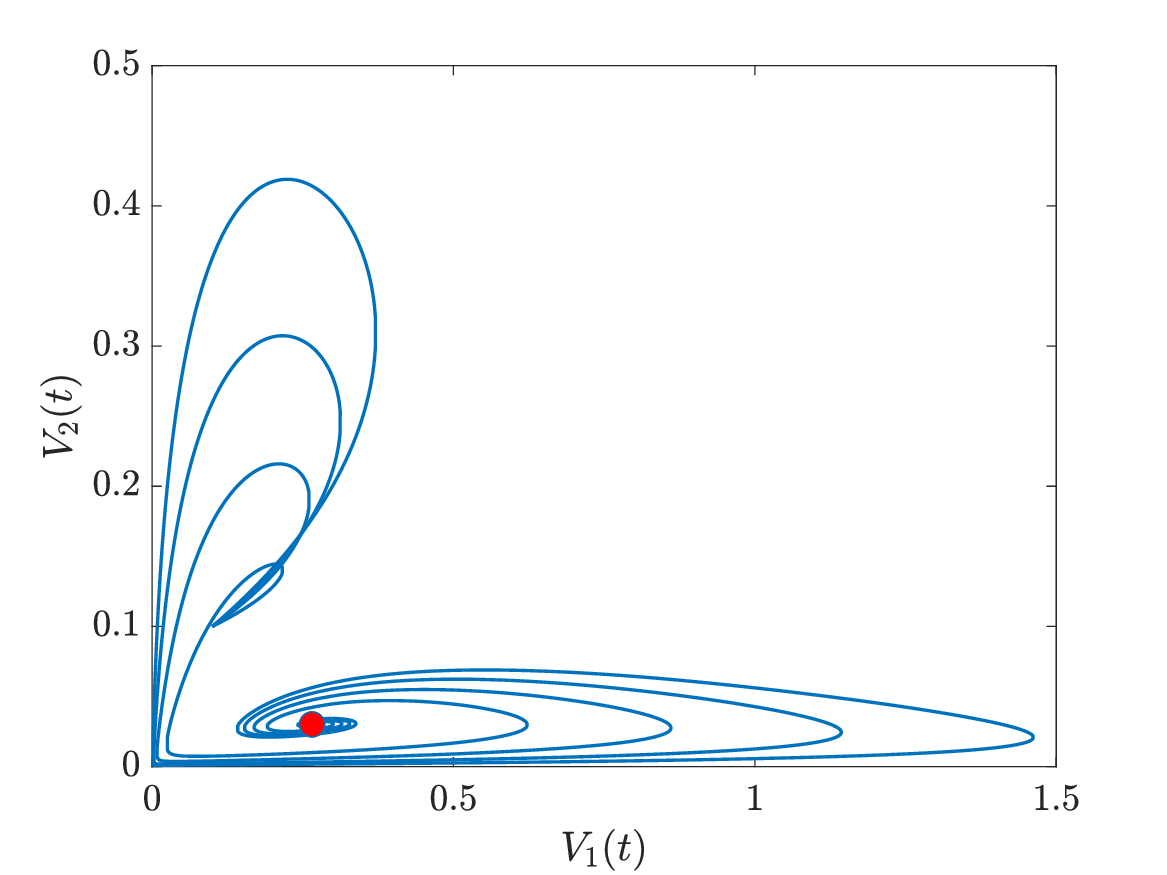}
\caption{Left: convergence of expected values $\mathbf m = (m_1,m_2)$ towards equilibrium $\mathbf m^\infty$ defined in \eqref{eq:equilibrium_mean}. Center: convergence of the variances solution to \eqref{eq:var_p12} obtained in the case $p = 1/2$ towards $\mathbf V^\infty_{(p=1/2)}$ in \eqref{eq:Vequil_p12}. Right: convergence of the variance solution to \eqref{eq:var_p1} obtained in the case $p = 1$ towards $\mathbf V^\infty_{(p=1)}$ in \eqref{eq:Vequil_p1}. The equilibrium points have been highlighted in red, we considered the parameters in Table \ref{Table:params} and as initial conditions $\mathbf m(0)= (\frac{9}{2},\frac 3 4), ( \frac{21}{4},\frac{15}{4}), (\frac{27}{4},\frac{21}{4}), (\frac{15}{2},6)$, and $\mathbf V_{(p = 1/2)} = \mathbf{V}_{(p=1)} = (\frac{1}{10},\frac{1}{10})$.  }
\label{fig:mv}
\end{figure}

\section{Trends to equilibrium}\label{sect:4}
In this section, we aim to study the long-time asymptotic of the Fokker-Planck model \eqref{intro:systFP}, where the coefficients are given by \fer{eq:values}. In more details, we start by  showing that the solution  to a Fokker--Planck type equation like the ones in \fer{intro:systFP}, but with constant coefficients will converge  in Cramér distance towards its equilibrium density in the whole range of the parameter $p$, namely $1/2 \le p \le 1$. Then, we extend the convergence results to cover the case of variable coefficients, which is the main purpose of this paper. Further, we will show that these convergence results can be extended to show decay towards equilibrium in various Energy distances in the border cases $p =1/2$ and $p=1$, which, at variance with the general case, can be treated by resorting to Fourier transform.

\subsection{Quasi-equilibrium distributions}\label{sect:41}
Since the coefficients of system \fer{intro:systFP} depend on time, for any given $t >0$, one can recover particular time-dependent solutions, the quasi-equilibrium densities. These densities $f^q_k(x,t)$, $k = 1,2$ are the probability densities solutions to the first order differential equations

\be\label{eq:qua} 
\dfrac{\sigma_k^2(t)}{2} \dfrac{\partial }{\partial x } (x^{2p} f^q_k(x,t)) +  \left( \lambda_k(t)x  - \mu_k(t)\right) f_k^q(x,t) =0
\ee
It is immediate to show that the unit mass solutions of \fer{eq:qua}, for any $1/2 \le p \le 1$  and $k =1,2$ are the functions
\be\label{eq:gen-Gamma}
f_k^q(x,t) = C_k^p(t)\,x^{-2p}\,
\exp\!\left[
-\frac{2\lambda_k(t)}{\sigma^2_k(t)(2-2p)}\,x^{\,2-2p}
-\frac{2\mu_k(t)}{\sigma^2_k(t)(2p-1)}\,\frac1{x^{\,2p-1}},
\right]
\ee
where $C_1^p(t),C_2^p(t)>0$ are suitable functions of time such that the quasi-equilibria are of unit mass. Note that the probability densities \fer{eq:gen-Gamma} can be rewritten as the product of the densities of two generalised Gamma \cite{Sta}, one with slim tails, the other with fat tails. Only in the limit cases $p\to 1/2$ and $p \to 1$ they reduce to  pure Gamma densities (respectively  inverse Gamma), and only in these limit cases one can explicitly compute the normalisation constants $C_k(t)$, $k=1,2$. 

The interest in the quasi-equilibria is related to the fact that one  expects these solutions to represent well, for intermediate times, the true solution of the system. For this reason, it would be of paramount importance to understand if this property can be verified.

Note that, since for $t \to \infty$ the values in \fer{eq:values} converge to their limit values in \fer{eq:values_large}, the quasi-equilibrium densities converge towards the equilibria
\be\label{eq:equi-Gamma}
f_k^\infty (x) = C_k^p\,x^{-2p}\,
\exp\!\left[
-\frac{2\lambda_{k,\infty}}{\sigma_{k,\infty}^2(2-2p)}\,x^{\,2-2p}
-\frac{2\mu_{k,\infty}}{\sigma^2_{k,\infty}(2p-1)}\,\frac1{x^{\,2p-1}},
\right]
\ee
where $C_1^p,C_2^p>0$ are the constants such that the quasi-equilibria are of unit mass, and the constants $\lambda_{k,\infty}$, $\mu_{k,\infty}$, $\sigma_{k,\infty}^2$ are the ones defined in \eqref{eq:values_large} . As before, only the border cases $p= 1/2$ and $p = 1$ provide a known class of equilibrium densities, Gamma and respectively inverse Gamma densities.  

In the case $p = {1}/{2}$ the quasi-equilibrium  Gamma densities,  take the form
\begin{equation}
\label{eq:quasi_eq_G}
\begin{split}
f_1^q(x,t) &= \dfrac{\omega_1(t)^{-\nu_1(t)}}{\Gamma(\nu_1(t))}x^{\nu_1(t)-1}
\exp\!\left[
-\dfrac{x}{\omega_1(t)}\right], \\
f_2^q(x,t) &= \dfrac{\omega_2(t)^{-\nu_2(t)}}{\Gamma(\nu_2(t))} x^{\nu_2(t)-1}\exp\left[-\dfrac{x}{\omega_2(t)} \right]
\end{split}
\end{equation}
where 
\[
\begin{split}
\omega_1(t) &=  \dfrac{\sigma_1^2(m_1(t)+m_2(t))}{2(\beta m_2(t) + \frac{\alpha}{K}m_1(t) + \alpha \chi)}, \qquad \nu_1 = \frac{2\alpha(\chi+1)m_1(t)}{\sigma_1^2(m_1(t)+m_2(t))} \\
\omega_2(t) &= \dfrac{\sigma_2^2m_1(t)}{2(\gamma(\mu-m_1(t))+\nu\theta)}, \qquad \nu_2 = \dfrac{2\nu(\theta+1)m_2(t)}{\sigma_2^2 m_1(t)}. 
\end{split}
\]
The mean values of the  quasi-equilibrium Gamma density \fer{eq:quasi_eq_G} are
\[
m_1^q(t) = \omega_1(t)\nu_1(t) = \dfrac{\alpha(\chi+1)m_1(t)}{\beta m_2(t) + \frac{\alpha}{K}m_1(t) + \alpha \chi}, \qquad m_2^q(t) = \omega_2(t)\nu_2(t) = \dfrac{\nu(\theta+1)m_2(t)}{\gamma(\mu-m_1(t))+\nu\theta}. 
\]
Note that $f_1^\infty(x),f_2^\infty(x)$ are the Gamma densities
\begin{equation}
\label{eq:equilibrium_G}
\begin{split}
f_1^\infty(x) = \dfrac{(\omega_1^\infty)^{-\nu_1^\infty}}{\Gamma(\nu_1^\infty)}x^{\nu_1^\infty-1}
\exp\!\left[
-\dfrac{x}{\omega_1^\infty}\right], \qquad
f_2^\infty(x) = \dfrac{(\omega_2^\infty)^{-\nu_2^\infty}}{\Gamma(\nu_2^\infty)} x^{\nu_2^\infty-1}\exp\left[-\dfrac{x}{\omega_2^\infty} \right],
\end{split}
\end{equation}
with parameters 
\[
\begin{split}
\omega_1^\infty = \dfrac{\sigma_1^2(m_1^\infty + m_2^\infty)}{2(\beta m_2^\infty + \frac{\alpha}{K} m_1^\infty + \alpha \chi)}, \qquad  \nu_1^\infty = \dfrac{2\alpha(\chi+1)m_1^\infty}{\sigma_1^2(m_1^\infty + m_2^\infty)} \\
\omega_2^\infty = \dfrac{\sigma_2^2 m_1^\infty}{2(\gamma(\mu - m_1^\infty) + \nu \theta)}, \qquad \nu_2^\infty = \dfrac{2\nu(\theta + 1)m_2^\infty}{\sigma_2^2 m_1^\infty},
\end{split}
\] 
Likewise, if $p = 1$ we obtain inverse-Gamma-shaped quasi-equilibria
\begin{equation}
\label{eq:quasi_eq_invG}
\begin{split}
f_1^q(x,t) =  \dfrac{\bar \omega_1^{\bar \nu_1}}{\Gamma(\bar \nu_1)} x^{-1-\bar \nu_1}
\exp\!\left[
-\dfrac{\bar \omega_1}{x}
\right], \qquad
f_2^q(x,t) =\dfrac{\bar \omega_2^{\bar \nu_2}}{\Gamma(\bar \nu_2)} x^{-1-\bar \nu_2}
\exp\!\left[-\dfrac{\bar \omega_2}{x}\right]
\end{split}
\end{equation}
where
\begin{equation}
\label{eq:param_iG}
\begin{split}
\bar \omega_1 &= \frac{2\left(\beta m_2+\frac{\alpha}{K}m_1+\alpha\chi\right)}{\sigma_1^2(m_1+m_2)}, \qquad \bar \nu_1 = \dfrac{2\alpha(\chi+1)m_1}{\sigma_1^2(m_1+m_2)} + 1 \\
\bar \omega_2 &= \dfrac{2(\gamma(\mu-m_1) + \nu\theta)}{\sigma_2^2 m_1} ,\quad \qquad \bar \nu_2 = \dfrac{2\nu(\theta+1)m_2}{\sigma_2^2 m_1} + 1. 
\end{split}
\end{equation}
In this case,  the shape parameters in \eqref{eq:param_iG} are such that $\bar \nu_1>1,\bar \nu_2>1$. Therefore, the first order moment of the inverse Gamma quasi-equilibrium are well defined and given by 
\[
m_1^q = \dfrac{\bar \omega_1}{\bar \nu_1-1} = \dfrac{\beta m_2 + \frac{\alpha}{K}m_1 + \alpha \chi}{\alpha(\chi+1)m_1}, \qquad m_2^q = \dfrac{\bar\omega_2}{\bar \nu_2-1} = \dfrac{\gamma(\mu-m_1)+\nu\theta}{\nu(\theta+1)m_2}
\]
The equilibrium densities if $p =1$ are  inverse Gamma densities
\begin{equation}
\label{eq:equilibrium_invG}
f_1^\infty(x) = \dfrac{(\bar \omega_1^\infty)^{\bar \nu_1^\infty}}{\Gamma(\bar \nu_1^\infty)} x^{-1-\bar \nu_1^\infty}
\exp\!\left[
-\dfrac{\bar \omega_1^\infty}{x}
\right], \qquad
f_2^\infty(x) =\dfrac{(\bar \omega_2^\infty)^{\bar \nu_2^\infty}}{\Gamma(\bar \nu_2^\infty)} x^{-1-\bar \nu_2^\infty}
\exp\!\left[-\dfrac{\bar \omega_2^\infty}{x}\right]
\end{equation}
are inverse Gamma densities with parameters 
\begin{equation}\label{eq:nu_p1}
\begin{split}
\bar \omega_1^\infty &= \frac{2\left(\beta m_2^\infty+\frac{\alpha}{K}m_1^\infty+\alpha\chi\right)}{\sigma_1^2(m_1^\infty+m_2^\infty)}, \qquad \bar \nu_1^\infty = \dfrac{2\alpha(\chi+1)m_1^\infty}{\sigma_1^2(m_1^\infty+m_2^\infty)} + 1 \\
\bar \omega_2^\infty&= \dfrac{2(\gamma(\mu-m_1^\infty) + \nu\theta)}{\sigma_2^2 m_1^\infty} ,\quad \qquad \bar \nu_2^\infty = \dfrac{2\nu(\theta+1)m_2^\infty}{\sigma_2^2 m_1^\infty} + 1, 
\end{split}
\end{equation}

\subsection{Convergence to equilibrium in Cramér distance}\label{sec:equi}

As a first step, in this Section we study directly the convergence of the solutions to the Fokker--Planck system \fer{intro:systFP} towards the equilibrium distributions, for any value of the diffusion constant $1/2 \le p \le 1$.  To start with, we discuss the convergence to equilibrium in Cramér distance, as defined in \fer{Cramer}. In this case,  the equilibria  take the form \fer{eq:equi-Gamma}.

 Since in system \fer{intro:systFP} the coefficients depend on time, the Fokker--Planck equations in \fer{intro:systFP} read
 \begin{equation}
\label{eq:FP-gen}
\dfrac{\partial}{\partial t} f_k(x,t) = \dfrac{\sigma_k^2(t)}{2}\dfrac{\partial^2}{\partial x^2}  (x^{2p}f_k(x,t)) +\dfrac{\partial}{\partial x}  [(\lambda_k(t) x-\mu_k(t))f_k(x,t)], \qquad k=1,2,
\end{equation}
where $\sigma_k^2(t), \lambda_k(t), \mu_k(t)$ are defined in \fer{eq:values}. These linear equations are coupled with no-flux boundary conditions in $x=0$. 
Therefore, in view of the convergence of mean values $m_k(t)$ of the Lotka--Volterrra equations \fer{eq:LV} towards a unique equilibrium point, all these parameters converge in time towards the asymptotic values $\sigma^2_{k,\infty}, \lambda_{k,\infty}, \mu_{k,\infty}$ in \eqref{eq:values_large}.

In reason of \fer{eq:qua}, we know that the equilibrium densities $f^\infty_k(x)$, $k=1,2$ solve the equations
\be\label{eq:chiave}
\dfrac{\sigma^2_{k,\infty}}{2}\dfrac{\partial^2}{\partial x^2}  (x^{2p}f_k^\infty(x)) +\dfrac{\partial}{\partial x}  ((\lambda_{k,\infty} x-\mu_{k,\infty})f^{\infty}_k(x)) =0.
\ee
Therefore, equation \fer{eq:FP-gen} can be rewritten in the equivalent form
\begin{equations} \label{eq:chiave2}
& \dfrac{\partial}{\partial t}[ f_k(x,t) -f^\infty_k(x)] = \dfrac{\sigma^2_k(t)}{2}\dfrac{\partial^2}{\partial x^2}  [x^{2p}(f_k(x,t)-f^\infty_k(x))] + \\
&\qquad \dfrac{\partial}{\partial x} [ (\lambda_k(t) x-\mu_k(t))(f_k(x,t)-f^\infty_k(x))] +\\
&\qquad\qquad\dfrac{\sigma^2_k(t)-\sigma^2_{k,\infty}}{2}\dfrac{\partial^2}{\partial x^2}  (x^{2p}f^\infty_k(x)) + 
\dfrac{\partial}{\partial x} \left[ (\lambda_k(t)-\lambda_{k,\infty}) x-( \mu_k(t)-\mu_{k,\infty})]f_k^\infty(x)\right].
\end{equations}
Furthermore, since the equilibrium density satisfies \fer{eq:chiave}, it holds
\[
\dfrac{\partial^2}{\partial x^2}  (x^{2p}f_k^\infty(x)) = -\frac {2\lambda_{k,\infty}}{\sigma^2_{k,\infty}} \dfrac{\partial}{\partial x} ( xf_k^\infty(x))  + \frac {2\mu_{k,\infty}}{\sigma^2_{k,\infty}}\dfrac{\partial}{\partial x} ( f_k^\infty(x)), \qquad k=1,2,
\]
and we can eliminate the dependence from the second derivative of $f_k^\infty$ into the last line of equation \fer{eq:chiave2} to obtain
\begin{equations} \label{eq:chiave3}
& \dfrac{\partial}{\partial t}[ f_k(x,t) -f_k^\infty(x)] = \dfrac{\sigma_k^2(t)}{2}\dfrac{\partial^2}{\partial x^2}  [x^{2p}(f_k(x,t)-f_k^\infty(x))] + \\
& \dfrac{\partial}{\partial x} [ (\lambda_k(t) x-\mu_k(t))(f_k(x,t)-f_k^\infty(x))] +
A_k(t) \dfrac{\partial}{\partial x}[xf_k^\infty(x)]  + B_k(t) \dfrac{\partial}{\partial x}f_k^\infty(x),
\end{equations}
where
\be\label{eq:AB}
A_k(t) = (\lambda_k(t) -\lambda_{k,\infty}) -\lambda_{k,\infty}\frac{\sigma_k^2(t) -\sigma_{k,\infty}^2}{\sigma_{k,\infty}^2}; \quad 
B_k(t) = \mu_{k,\infty}\frac{\sigma_k^2(t) -\sigma_{k,\infty}^2}{\sigma_{k,\infty}^2}- ( \mu_k(t) -\mu_{k,\infty})
\ee
with $k=1,2$. Integrating equation \fer{eq:chiave3} in $[0,x]$, and recalling the definition of cumulative function,  as defined in \fer{cumu},  gives the evolution of the difference between the cumulative functions of the solution to the Fokker--Planck equation \fer{eq:FP-gen},  and $F^\infty_k(x)$, its equilibrium solution. This integration gives
\begin{equations} \label{eq:cum}
& \dfrac{\partial}{\partial t}[ F_k(x,t) -F_k^\infty(x)] = \dfrac{\sigma_k^2(t)}{2}\dfrac{\partial}{\partial x}  [x^{2p}(f_k(x,t)-f_k^\infty(x))] + \\
&  (\lambda_k(t) x-\mu_k(t))\dfrac{\partial}{\partial x}(F_k(x,t)-F_k^\infty(x))+
A_k(t) xf_k^\infty(x)  + B_k(t)f_k^\infty(x).
\end{equations}
This equation allows to easily compute the evolution in time of Cram\'er distance $d(f_k(t),f_k^\infty)$, as defined in \fer{Cramer}. We obtain
\begin{equations}\label{eq:Cramer}
&\frac d{dt}\int_0^\infty (F_k(x,t) -F_k^\infty(x))^2 \, dx = \sigma^2_k(t)\int_0^\infty (F_k(x,t) -F_k^\infty(x))\dfrac{\partial}{\partial x}  [x^{2p}(f_k(x,t)-f_k^\infty(x))]  \,dx + \\
& \int_0^\infty  (\lambda_k(t) x-\mu_k(t))\dfrac{\partial}{\partial x}[(F_k(x,t)-F_k^\infty(x))^2]\, dx  +\\
& 2\int_0^\infty (F_k(x,t)-F_k^\infty(x))\left[A_k(t) xf_k^\infty(x)  + B_k(t)f_k^\infty(x) \right] 
\end{equations}
for $k=1,2$. Integrating by parts the first two integrals, and applying Cauchy-Schwartz inequality to the last two, we conclude with the differential inequality 
\begin{equations}\label{eq:Cramer}
&\frac d{dt}\int_0^\infty (F_k(x,t) -F_k^\infty(x))^2 \, dx \le \\
&- \sigma_k^2(t)\int_0^\infty x^{2p}(f_k(x,t)-f_k^\infty(x))^2  \,dx - \lambda_k(t) \int_0^\infty (F_k(x,t)-F_k^\infty(x))^2\, dx + \\
& 2A_k(t) \left(\int_0^\infty (F_k(x,t)-F_k^\infty(x))^2\, dx\right)^{1/2} \left(\int_0^\infty x^2(f_k^\infty(x))^2 \,dx\right)^{1/2} + \\
&2B_k(t)\left(\int_0^\infty (F_k(x,t)-F_k^\infty(x))^2\, dx\right)^{1/2} \left(\int_0^\infty (f_k^\infty(x))^2 \,dx\right)^{1/2}.
\end{equations}
It is important to observe that, in view of definition \fer{eq:equi-Gamma} of the equilibrium densities, which have finite mean values, $f_k^\infty(x)$ and $xf_k^\infty(x)$ belong to $L^2(\R_+)$, the worst case being $p=1$. Consequently, for every $p$ with $1/2 \le p \le 1$, the Cram\'er distance \eqref{Cramer} satisfies the inequality
\begin{equations}\label{eq: ine-Cra}
\frac d{dt} d(f_k(t),f_k^\infty) &\le -\lambda_k(t) d(f_k(t),f_k^\infty) +\\
& 2 A_k(t) d^{1/2}(f_k(t),f_k^\infty) \|xf_k^\infty(x)\|_{L^2} + 2 B_k(t) d^{1/2}(f_k(t),f_k^\infty) \|f_k^\infty(x)\|_{L^2}.
\end{equations}
This differential inequality is a first order Bernoulli equation and it can be explicitly solved  to prove convergence towards equilibrium, provided 
 $A_k(t)$ and $B_k(t)$ converge to zero. In particular, the solution satisfies the bound
\[
d(f_k,f_k^\infty) \le  \left(\sqrt{d_0}e^{-\frac 1 2\int_0^t \lambda_k(\tau)d\tau}  + \int_0^t e^{-\frac 1 2\int_s^t \lambda_k(s)ds}\left(A_k(s)\| x f_k^\infty\|_{L^2} + B_k(s)\|f_k^\infty \|_{L^2}\right)ds\right)^2,
\]
with $d_0 = d(f_k(0),f_k^\infty)$. Therefore, provided $\lambda_k(t) \ge \lambda^*>0$ and 
\be\label{Mt}
M(t) = A_k(t)\| x f_k^\infty\|_{L^2} + B_k(t)\|f_k^\infty \|_{L^2} \to 0
\ee
exponentially, we have
\[
d(f_k,f_k^\infty) \le \left( \sqrt{d( f_{k}(0) , f_k^\infty)}e^{-\frac{\lambda^*}{2}t} + \dfrac{2M(t)}{\lambda^*}(1-e^{-\frac{\lambda^*}{2}t})\right)^2,
\]
from which we deduce exponential decay of the initial Cramér distance. 
Hence we proved

\begin{theorem}\label{th:p-gen}
Let $f_k(x,t)$, $k=1,2$ be the solutions of the Fokker--Planck equations \fer{eq:FP-gen}, with initial values the probability densities $f_k(0)=f_{k,0}$  such that, for any $1/2\le p \le 1$ 
\[
d( f_{k,0} , f_k^\infty)  < +\infty, \qquad  k = 1,2, 
\]
where $f_k^\infty$ are the equilibrium solutions in \fer{eq:equi-Gamma}. Then, the solutions to \fer{eq:FP-gen}, and consequently of system \eqref{intro:systFP} satisfy
\[
 d( f_{k} (t), f_k^\infty )\le \left( \sqrt{d( f_{k,0} , f_k^\infty)}e^{-\frac{\lambda^*}{2}t} + \dfrac{2M(t)}{\lambda^*}(1-e^{-\frac{\lambda^*}{2}t})\right)^2 \qquad  k = 1,2,
\]
where $M(t) \to 0$ as $t\to+\infty$ {and $\lambda_k(t)\ge \lambda^*>0$, $k=1,2$, for all $t\ge0$. }
\end{theorem}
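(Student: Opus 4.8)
The plan is to derive a single scalar differential inequality for the Cramér distance $d(f_k(t),f_k^\infty)$ and then integrate it. The starting point is the evolution equation \fer{eq:chiave3} for $f_k(x,t)-f_k^\infty(x)$, which is already set up so that the equilibrium has been subtracted off, leaving a Fokker--Planck-type operator with time-dependent coefficients plus two inhomogeneous source terms involving $A_k(t)$ and $B_k(t)$ defined in \fer{eq:AB}. First I would integrate \fer{eq:chiave3} in $x$ over $[0,x]$, using the no-flux boundary condition at $x=0$, to obtain equation \fer{eq:cum} for the difference of cumulative distribution functions $F_k(x,t)-F_k^\infty(x)$. This is the crucial reformulation: it converts the diffusion operator into a form amenable to an $L^2$ energy estimate on the cumulatives, which is exactly the Cramér distance.

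Next I would multiply \fer{eq:cum} by $2(F_k(x,t)-F_k^\infty(x))$ and integrate over $\R_+$, obtaining $\tfrac{d}{dt}d(f_k(t),f_k^\infty)$ as in \fer{eq:Cramer}. The four resulting terms are handled as follows: integration by parts turns the first (diffusion) term into $-\sigma_k^2(t)\int_0^\infty x^{2p}(f_k-f_k^\infty)^2\,dx \le 0$, which we simply discard; integration by parts on the drift term, using $\partial_x[(F_k-F_k^\infty)^2]$ and then differentiating the coefficient $\lambda_k(t)x-\mu_k(t)$, produces the dissipative term $-\lambda_k(t)d(f_k(t),f_k^\infty)$ (the $\mu_k(t)$ piece and the boundary contributions vanish or have a favourable sign once one checks the behaviour at $x=0$ and $x=\infty$); and the two source terms are bounded by Cauchy--Schwarz, yielding $2A_k(t)d^{1/2}\|xf_k^\infty\|_{L^2}$ and $2B_k(t)d^{1/2}\|f_k^\infty\|_{L^2}$. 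Here one must observe, from the explicit form \fer{eq:equi-Gamma} of the generalized Gamma equilibria, that both $f_k^\infty$ and $xf_k^\infty$ lie in $L^2(\R_+)$ for every $p\in[\tfrac12,1]$ — the borderline case being $p=1$, where $f_k^\infty \sim x^{-2}e^{-c/x}$ near the origin, which is still square integrable. This gives the differential inequality \fer{eq: ine-Cra}.

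Finally, writing $u(t)=\sqrt{d(f_k(t),f_k^\infty)}$, the inequality \fer{eq: ine-Cra} becomes the linear differential inequality $u'(t)\le -\tfrac12\lambda_k(t)u(t)+A_k(t)\|xf_k^\infty\|_{L^2}+B_k(t)\|f_k^\infty\|_{L^2}$, i.e. a Bernoulli/Gr\"onwall-type inequality. Integrating with the integrating factor $\exp(\tfrac12\int_0^t\lambda_k(\tau)\,d\tau)$ and using the uniform lower bound $\lambda_k(t)\ge\lambda^*>0$ — which follows from $\lambda_k(\mathbf m(t))$ converging to $\lambda_{k,\infty}>0$ and being continuous and positive along the bounded trajectory $\mathbf m(t)$ — together with $M(t)=A_k(t)\|xf_k^\infty\|_{L^2}+B_k(t)\|f_k^\infty\|_{L^2}\to 0$, yields the stated bound after squaring. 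The convergence $A_k(t),B_k(t)\to 0$ is immediate from \fer{eq:AB}, since $\sigma_k^2(t)\to\sigma_{k,\infty}^2$, $\lambda_k(t)\to\lambda_{k,\infty}$, $\mu_k(t)\to\mu_{k,\infty}$ as consequences of $\mathbf m(t)\to\mathbf m^\infty$.

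The main obstacle I anticipate is not the Gr\"onwall step but the rigorous justification of the integrations by parts in the energy estimate: one needs enough regularity and decay of $f_k(x,t)-f_k^\infty(x)$ (and of $x^{2p}$ times it) near $x=0$ and as $x\to\infty$ to discard boundary terms, and one must check that the drift contribution genuinely reduces to $-\lambda_k(t)d$ rather than $-\lambda_k(t)d$ plus an uncontrolled remainder — in particular the term coming from $-\mu_k(t)\partial_x[(F_k-F_k^\infty)^2]$ integrates to a boundary value at $x=0$ which vanishes because $F_k(0,t)=F_k^\infty(0)=0$. A secondary technical point is verifying $\lambda_k(t)\ge\lambda^*>0$ uniformly in $t$; this requires that the Lotka--Volterra trajectory $\mathbf m(t)$ stays in a compact region of the positive quadrant on which $\lambda_k[\mathbf m]$ is bounded below, which is where the hypothesis $\alpha(K-\delta)>0$ and the structure of \fer{eq:LV} enter.
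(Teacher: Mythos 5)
Your proposal follows essentially the same route as the paper: subtracting the equilibrium equation to obtain \fer{eq:chiave3} with the sources $A_k(t)$, $B_k(t)$, passing to the cumulative functions via \fer{eq:cum}, integrating by parts to extract the nonpositive diffusion term and the dissipative $-\lambda_k(t)\,d$ contribution, bounding the sources by Cauchy--Schwarz using $f_k^\infty, xf_k^\infty\in L^2(\R_+)$, and closing with the Bernoulli/Gr\"onwall argument for $\sqrt{d}$. The additional remarks on boundary terms and on the uniform lower bound $\lambda_k(t)\ge\lambda^*>0$ are correct refinements of points the paper treats implicitly.
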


\begin{remark} It is important to note that the advantage of working with Cram\'er distance (namely with the Energy distance of order $\ell=1$) is linked to the possibility in this case to have, for any $1/2\le p\le 1$, a negative contribution $D_k(t)$ from the diffusion term, given in equation \fer{eq:Cramer} by
\[
D_k(t) = - \sigma^2_k(t)\int_0^\infty x^{2p}(f_k(x,t)-f_k^\infty(x))^2  \,dx.
\]
This obstacle is not present in the two extremal cases $p=1/2$ and $p=1$, where we can treat the same problem by resorting to Fourier transform. Hence, in these extremal cases, we can easily extend the convergence to equilibrium in Energy distance of order $\ell$, with $1 <\ell<3/2$ by treating equation \fer{eq:chiave2} in Fourier to show the non positivity of the contribution of the diffusion in this range of the parameter $\ell$. 
\end{remark} 

{
\begin{remark}
From \eqref{eq:equilibrium_mean} we conclude that $\lambda_{1,\infty},\lambda_{2,\infty}>0$. Since $m_1(t) \to m_1^\infty$, $m_2(t) \to m_2^\infty$ as $t \to +\infty$, there exists $t^*>0$ such that the condition $\lambda_1(t),\lambda_2(t)\ge \lambda^*>0$ is true for any $t>t^*$.
\end{remark}
}

\subsection{Convergence in Energy distance for $p=1/2$ and $p=1$}\label{sec:quasi}

In this Section, we will prove that in the relevant cases $p=1/2$ and $p=1$ the solutions $f_k(x,t)$, $k=1,2$, to the Fokker--Planck equations \fer{eq:FP-gen},  converge to their equilibria {$f_k^\infty(x)$}, $k=1,2$ in Energy distance  of order $\ell$, provided the parameter $\ell$ characterising the normalised Energy distance \fer{eq:norm} is sufficiently big (typically $\ell \ge1$). Indeed, the special cases $p=1/2$ and $p=1$ can be treated by writing the Fokker--Planck equations in Fourier transform, which allows {direct computations. In the following we first study the case in which coefficients do not depend on time}. 

\subsubsection{{The time-independent case}} \label{subsect:time_in}
If the coefficients do not depend on time, i.e. $\sigma^2,\lambda,\mu>0$ are time-independent positive parameter, the Fokker--Planck equations in \fer{eq:FP-gen} belong to the class
\begin{equation}
\label{eq:FP_protot_p12}
\dfrac{\partial}{\partial t} f(x,t) = \dfrac{\sigma^2}{2}\dfrac{\partial^2}{\partial x^2}  (x^{2p}f(x,t)) +\dfrac{\partial}{\partial x}  ((\lambda x-\mu)f(x,t)). 
\end{equation}
This linear equation is coupled with no-flux boundary conditions in $x=0$. When $p =1/2$, the steady state is the Gamma distribution
\begin{equation}
\label{eq:finf_FP_protot_p12}
f^\infty(x) = \dfrac{\omega^{-\nu}}{\Gamma(\nu)} x^{\nu-1} e^{-x/\omega}, \qquad \nu = \dfrac{2\mu}{\sigma^2},\omega = \dfrac{\sigma^2}{2\lambda}. 
\end{equation}
We prove
\begin{lemma}\label{lem:energy}
Let $f(x,t)$ be the solution to the Fokker--Planck equation \fer{eq:FP_protot_p12}  with $p =1/2$, initial value $f_0(x) $ and equilibrium density $f^\infty(x)$, as given in \fer{eq:finf_FP_protot_p12}, such that $\mathcal{E}_\ell (f_0, f_\infty)$ is bounded for $1\le \ell <3/2$. Then the Energy distance $\mathcal{E}_\ell (f(t), f_\infty)$ is exponentially decaying to zero at rate $\lambda(2\ell-1)$.
\end{lemma}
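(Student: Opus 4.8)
The plan is to pass to the Fourier transform of the perturbation $h(x,t)=f(x,t)-f^\infty(x)$ and to close a Grönwall inequality for $\mathcal E_\ell(f(t),f^\infty)=\int_{\R}|\widehat h(\xi,t)|^2|\xi|^{-2\ell}\,d\xi$, keeping track of exactly which part of the operator produces the decay rate $\lambda(2\ell-1)$ and which part only contributes with a favourable sign. Since $f^\infty$ is stationary for \fer{eq:FP_protot_p12} and the coefficients are constant, $h$ solves the same linear equation, has zero mass, and is supported in $\R_+$. With $p=1/2$ (so $x^{2p}=x$), using $\widehat{x g}=i\,\partial_\xi\widehat g$, its Fourier transform satisfies the first–order equation
\[
\partial_t\widehat h=-\bigl(\lambda\,\xi+\tfrac{i\sigma^2}{2}\xi^2\bigr)\,\partial_\xi\widehat h-i\mu\,\xi\,\widehat h .
\]

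Differentiating $\mathcal E_\ell$ in time and splitting $2\,\mathrm{Re}(\overline{\widehat h}\,\partial_t\widehat h)$ along the three terms, the zeroth–order drift $-i\mu\xi\widehat h$ contributes nothing, being $-\mu\xi\,\mathrm{Re}(i|\widehat h|^2)=0$. For the linear drift $-\lambda\xi\,\partial_\xi\widehat h$, writing $\mathrm{Re}(\overline{\widehat h}\,\partial_\xi\widehat h)=\tfrac12\partial_\xi|\widehat h|^2$, integrating by parts, and using $\partial_\xi(\xi|\xi|^{-2\ell})=(1-2\ell)|\xi|^{-2\ell}$, one gets exactly $-\lambda(2\ell-1)\,\mathcal E_\ell(f(t),f^\infty)$, i.e. the asserted rate. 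The boundary terms here vanish precisely in the range $1\le\ell<3/2$: at infinity because $|\widehat h|\le 2$, and at the origin because $\widehat h(0)=0$ (mass conservation) together with finiteness of the first moment gives $|\widehat h(\xi)|=\mathcal{O}(|\xi|)$, so $\xi^{1-2\ell}|\widehat h|^2=\mathcal{O}(\xi^{3-2\ell})\to0$. Propagation along the flow of the finite first moment and of $\mathcal E_\ell$ itself, which legitimates these manipulations, follows from the moment computations of Section~\ref{subsect:varFP} together with the closed inequality once established.

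The crux is that the diffusion term $-\tfrac{i\sigma^2}{2}\xi^2\,\partial_\xi\widehat h$ contributes $D_\ell(t)=\sigma^2\int_{\R}|\xi|^{2-2\ell}\,\mathrm{Im}(\overline{\widehat h}\,\partial_\xi\widehat h)\,d\xi$, and one must show $D_\ell(t)\le 0$. For $\ell=1$ this is transparent: since $h$ is real and supported in $\R_+$, one has $\mathrm{Im}(\overline{\widehat h}\,\partial_\xi\widehat h)(\xi)=-\int_{\R_+^2}h(x)h(y)\tfrac{x+y}{2}\cos(\xi(x-y))\,dx\,dy$, so $D_1(t)=-2\pi\sigma^2\int_{\R_+}x\,(f-f^\infty)^2\,dx\le0$, in agreement with the Cram\'er computation of Section~\ref{sec:equi}. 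For $1<\ell<3/2$ I would combine this representation with $\int_{\R}|\xi|^{2-2\ell}\cos(\xi z)\,d\xi=c_\ell|z|^{2\ell-3}$, where $c_\ell=2\Gamma(3-2\ell)\sin(\pi(\ell-1))>0$, to obtain
\[
D_\ell(t)=-c_\ell\,\sigma^2\int_{\R_+^2}h(x,t)\,h(y,t)\,\frac{x+y}{2}\,|x-y|^{2\ell-3}\,dx\,dy ,
\]
so that $D_\ell\le 0$ is equivalent to the positive semidefiniteness of the kernel $\frac{x+y}{2}|x-y|^{2\ell-3}$ on $\R_+$ (tested against zero–mass densities). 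I expect this positivity to be the hardest step; a natural route is the subordination identity $|u|^{-(3-2\ell)}=\Gamma(\tfrac32-\ell)^{-1}\int_0^\infty t^{(1-2\ell)/2}e^{-tu^2}\,dt$, which reduces the claim to the non-negativity of $\int_{\R_+^2}h(x)h(y)\tfrac{x+y}{2}e^{-t(x-y)^2}\,dx\,dy$ for each fixed $t>0$, i.e. to a Gaussian–kernel positivity statement; alternatively one may change variables $x=a^2$, $y=b^2$ and exploit the factorisation $|x-y|=|a-b|(a+b)$.

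Finally, adding the three contributions gives $\tfrac{d}{dt}\mathcal E_\ell(f(t),f^\infty)\le-\lambda(2\ell-1)\,\mathcal E_\ell(f(t),f^\infty)$, and Grönwall's inequality yields $\mathcal E_\ell(f(t),f^\infty)\le\mathcal E_\ell(f_0,f^\infty)\,e^{-\lambda(2\ell-1)t}$, which is the assertion.
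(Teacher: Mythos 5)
Your strategy is the same as the paper's: pass to Fourier variables, check that the zeroth--order drift $-i\mu\xi\widehat h$ is neutral, extract the exact rate $-\lambda(2\ell-1)\mathcal E_\ell$ from the transport term $-\lambda\xi\partial_\xi$ by integration by parts (your treatment of the boundary terms at $\xi=0$ and $\xi=\pm\infty$ is actually more careful than the paper's), and reduce everything to showing that the diffusion contribution
\[
D_\ell(t)=\sigma^2\int_{\R}|\xi|^{2-2\ell}\,\mathrm{Im}\bigl(\overline{\widehat h}\,\partial_\xi\widehat h\bigr)\,d\xi
\]
is nonpositive. For $\ell=1$ your argument is complete and coincides with the paper's Plancherel computation. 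The problem is that for $1<\ell<3/2$ you stop exactly at the decisive step: the positive semidefiniteness of the kernel $\tfrac{x+y}{2}|x-y|^{2\ell-3}$ on zero--mass signed measures supported in $\R_+$ is left as a conjecture with two ``natural routes''. That is the entire content of the lemma in this range, so as written the proof has a genuine gap. Moreover, the Gaussian subordination route you favour does not obviously close it: it reduces the claim to the nonnegativity of $\int h(x)h(y)\tfrac{x+y}{2}e^{-t(x-y)^2}dx\,dy$, but $\tfrac{x+y}{2}$ is \emph{not} a positive definite kernel (its quadratic form is $(\int h)(\int xh)$), so Schur's product theorem is unavailable and the Gaussian statement is not visibly easier than the original one.

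For the record, your unproven claim is exactly equivalent to the inequality the paper uses to close the argument. The paper introduces the Riesz potential $\kappa_{\ell-1}$ and asserts $\int_{\R}x\,\bigl[h(\cdot,t)*\kappa_{\ell-1}\bigr]^2(x)\,dx\ge 0$, exploiting that $h$ is supported in $\R_+$; expanding the square and using the composition law $\kappa_{r}*\kappa_{r}=c\,\kappa_{2r}$ (valid here since $0<2\ell-2<1$) turns that integral into precisely $c\int h(x)h(y)\tfrac{x+y}{2}|x-y|^{2\ell-3}dx\,dy$. So the missing step in your write-up is the first--moment positivity of the squared Riesz transform of a distribution supported on the half-line; had you formulated it that way (your alternative suggestion via $|x-y|=|a-b|(a+b)$ points in a related direction), you would have recovered the paper's argument. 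Everything else in your proposal --- the Fourier reduction, the identification of the rate, the $\ell=1$ case, and the final Gr\"onwall step --- is correct and matches the paper.
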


\begin{proof}
In what follows we assume that the initial value $f_0(x)$ belong to $L^2(\R_+)\cap \dot{H}_{-1}(\R_+)$.  This implies that the solution $f(x,t) \in L^2(\R_+)$ for any subsequent time $t>0$. Moreover, as proven in Section \ref{sec:equi}, under the same hypotheses on the initial value, there is exponential convergence in Cram\'er distance of the solution to \fer{eq:FP_protot_p12} towards the steady state \fer{eq:finf_FP_protot_p12}. 

The result  of Section \ref{sec:equi} can be generalized by resorting to the possibility to make use in this case of the Fourier transform. 
The Fourier transformed version of the Fokker-Planck equation \fer{eq:FP_protot_p12} reads
\[
\dfrac{\partial}{\partial t}  \hat f(\xi,t) = -\dfrac{\sigma^2}{2}i\xi^2 \dfrac{\partial}{\partial \xi} \hat f(\xi,t) - \lambda \xi \dfrac{\partial}{\partial \xi} \hat f(\xi,t) - i\mu \xi \hat f(\xi,t). 
\]
Let us rewrite $\hat f(\xi,t) $ as follows $\hat f(\xi,t) = a(\xi,t) + ib(\xi,t)$, where $ a(\xi,t) = \mathrm{Re}(\hat f)$ and $b(\xi,t) = \mathrm{Im}(\hat f)$. Then we have
\[
\begin{split}
\dfrac{\partial a(\xi,t)}{\partial t} = \dfrac{\sigma^2}{2}\xi^2 \dfrac{\partial}{\partial \xi}b(\xi,t) - \lambda \xi \dfrac{\partial}{\partial \xi} a(\xi,t) + \mu \xi b(\xi,t) \\
\dfrac{\partial b(\xi,t)}{\partial t} = -\dfrac{\sigma^2}{2}\xi^2 \dfrac{\partial}{\partial \xi} a(\xi,t) - \lambda \xi \dfrac{\partial}{\partial \xi} b(\xi,t) - \mu\xi a(\xi,t). 
\end{split}
\]
To compute the evolution of $|\hat f(\xi)|^2 =a^2(\xi)+b^2(\xi) $ we multiply the above equation by $2a$ and $2b$, respectively, to obtain
\begin{equation}
\label{eq:evo_a2_b2}
\begin{split}
\dfrac{\partial a^2(\xi,t)}{\partial t} =  \sigma^2\xi^2 a(\xi,t)\dfrac{\partial}{\partial \xi}b(\xi,t) - \lambda \xi  \dfrac{\partial}{\partial \xi} a^2(\xi,t) + 2\mu \xi  a(\xi,t)b(\xi,t) \\
\dfrac{\partial b^2(\xi,t)}{\partial t} =-\sigma^2\xi^2 b(\xi,t)\dfrac{\partial}{\partial \xi} a(\xi,t) - \lambda \xi \dfrac{\partial}{\partial \xi} b^2(\xi,t) -2\mu\xi a(\xi,t) b(\xi,t). 
\end{split}
\end{equation}
Therefore, if $\ell = 1$, from \eqref{eq:evo_a2_b2} we have
\begin{equation}\label{eq:p1_el1}
\dfrac{\partial}{\partial t}\int_{\mathbb R}|\xi|^{-2}|\hat f|^2 d\xi = \sigma^2 \int_{\mathbb R}  \left[ a(\xi,t)\dfrac{\partial}{\partial \xi}b(\xi,t)  -  b(\xi,t)\dfrac{\partial}{\partial \xi} a(\xi,t) \right]d\xi -\lambda \int_{\mathbb R}|\xi|^{-2} \xi \dfrac{\partial }{\partial \xi} |\hat f|^2d\xi. 
\end{equation}
Next, we observe that being $f(x,t) $ a probability density in $L^2(\R_+)$ with finite variance, thanks to the Plancherel formula we get
\begin{equation}
\label{eq:sign_F}
0\le \int_{\mathbb R_+} x f^2(x,t)dx = \dfrac{1}{{2\pi}}\int_{\mathbb R} \widehat{xf}(\xi,t) \overline{\widehat{f}}(\xi,t)d\xi.
\end{equation}
Since
\[
\widehat{xf}(\xi,t) = \int_{\mathbb R} xf(x,t) e^{-i\xi x}dx = i \dfrac{\partial \hat f}{\partial \xi}(\xi,t),
\]
and $ \overline{\widehat{f}}(\xi,t) = a(\xi,t) - ib(\xi,t)$,  
\[
\begin{split}
 &\int_{\mathbb R_+} x f^2(x,t)dx = \int_{\mathbb R} \dfrac{\partial}{\partial \xi}[ia(\xi,t) - b(\xi,t)](a(\xi,t) - ib(\xi,t)) d\xi = \\
 & \int_{\mathbb R}\left[-a(\xi,t)\dfrac{\partial b(\xi,t)}{\partial \xi} + b(\xi,t)\dfrac{\partial a(\xi,t)}{\partial \xi} + \dfrac{i}{2}\dfrac{\partial}{\partial \xi}(a^2(\xi,t) + b^2(\xi,t)) \right]d\xi = \\
 & \int_{\mathbb R}\left[-a(\xi,t)\dfrac{\partial b(\xi,t)}{\partial \xi} + b(\xi,t)\dfrac{\partial a(\xi,t)}{\partial \xi}\right]d\xi.
\end{split}\]
On the other hand, since $f(x,t) \in L^2(\R_+)$
\[
\int_{\mathbb R} \dfrac{\partial}{\partial \xi}(a^2(\xi,t) + b^2(\xi,t)) \, d\xi = \left[ a^2(\xi,t) + b^2(\xi,t) \right]_{-\infty}^{+\infty} =0
\]
Therefore,  \eqref{eq:sign_F} implies
\[
\int_{\mathbb R}\left[a(\xi,t)\dfrac{\partial b(\xi,t)}{\partial \xi} - b(\xi,t)\dfrac{\partial a(\xi,t)}{\partial \xi}\right]d\xi \le 0, 
\]
and this shows that  the contribution of the diffusion in \eqref{eq:p1_el1} is negative.  

Now, it is enough to remark that, since the Fokker--Planck equation \fer{eq:FP_protot_p12} is linear, the previous arguments continue to hold if we substitute $f(x,t)$ with the difference $f(x,t) -f^\infty(x)$, where $f^\infty(x)$ is the equilibrium solution of the Fokker--Planck equation \fer{eq:FP_protot_p12}, and trivially satisfies it. Hence we proved that, provided  $f_0(x)-f^\infty(x) \in L^2(\R_+)\cap \dot{H}_{-1}(\R_+)$, the evolution in time of the Energy distance $\mathcal{E}_1(f(t),f^\infty)$ is such that
\[
\begin{split}
\dfrac{d}{dt} \mathcal{E}_1(f(t),f^\infty) \le -\lambda\, \mathcal{E}_1(f(t),f^\infty),
\end{split}\]
which shows exponential convergence to equilibrium of the solution to the Fokker--Planck equation \fer{eq:FP_protot_p12} at the explicit rate $\lambda$, thus confirming the result in \cite{AT}. 

More generally, if $1<\ell<\frac 32$, let  $h_0(t) =f_0(x)-f^\infty(x) \in L^2(\R_+)\cap \dot{H}_{-\ell}(\R_+)$, and let us consider the function $h*k_{2l-2}(x)$, where as usual the symbol $*$ denotes convolution and, for  $0<r<1$ 
\be\label{eq:Riesz} 
\kappa_{r}(x) = \frac 1{c_r}\frac 1{ |x|^{1-r}}; \qquad c_r = \sqrt \pi 2^r \frac{\Gamma\left(\frac r2 \right)}{\Gamma\left(\frac{1- r}2 \right)}
\ee
 is the Riesz potential \cite{Lieb,Stein}, which is such that 
\be\label{rf}
\widehat{h*\kappa_r}{(\xi)} = \frac 1{|\xi|^{r}} \hat h(\xi).
\ee
Since in the interval $1<\ell<\frac 32 $ the value $r=l-1$ is such that $0<r<1$, the Riesz potential is well defined and, owing to \fer{rf}, Plancherel formula gives
\begin{equation}
\label{eq:Riesz1}
\int_{\mathbb R_+}  \left[ h(x,t) * \kappa_{\ell-1}(x)\right]^2 dx = \dfrac{1}{2\pi} \int_{\mathbb R} \frac 1{|\xi|^{2l-2}} |\hat h(\xi)|^2 d\xi = \|h\|_{ \dot{H}_{-(2l-2)}}.
\end{equation}
In view of the previous result in $\dot{H}_{-1}(\R_+)$, and thanks to Lemma \ref{lemma1} , we conclude that, since $0<2l-2<1$, $\|h\|_{ \dot{H}_{-(2l-2)}}$ is bounded. Therefore, for any given $t >0$, the function 
\[
0 \le g(x) = \frac{ \left[ h(x,t) * \kappa_{\ell-1}(x)\right]^2}{ \int_\R \left[ h(x,t) * \kappa_{\ell-1}(x)\right]^2\, dx}
\]
is a probability density such that, since $h(x,t) = 0$ if $x \le 0$, its mass on $\R_+$ is strictly bigger than its mass in $\R_-$, which implies
\be\label{eq:pm}
0 \le \int_\R xg(x)\,dx. 
\ee
Hence
\begin{equation}
\label{eq:inequality_ellg1}
\int_{\mathbb R_+} x \left[ h(x,t) *\kappa_{\ell-1}(x)\right]^2 dx = \dfrac{1}{2\pi} \int_{\mathbb R} \widehat{x h*\kappa_{\ell-1}}(\xi,t) \overline{\widehat{f*\kappa_{\ell-1}}} d\xi \ge 0.
\end{equation}
Since 
\[
 \widehat{x h*\kappa_{\ell-1}}(\xi,t) = \int_{\mathbb R_+}xh*\kappa_{\ell-1}(x,t)e^{-i\xi x}dx = \dfrac{i}{|\xi|^{\ell-1}}\dfrac{\partial \hat h}{\partial \xi},
 \]
  if we set $\hat h(\xi,t) = a(\xi,t) + i b(\xi,t)$, the integral in \eqref{eq:inequality_ellg1} can be written in the form
 \[
 \int_{\mathbb R} \dfrac{1}{|\xi|^{2\ell-2}} \dfrac{\partial}{\partial \xi}[ia(\xi,t) -b(\xi,t) ]\left( a(\xi,t) - ib(\xi,t) \right) d\xi =  \int_{\mathbb R}\left[ - \dfrac{a(\xi,t)}{|\xi|^{2\ell-2}}\dfrac{\partial b(\xi,t)}{\partial \xi} +  \dfrac{b(\xi,t)}{|\xi|^{2\ell-2}}\dfrac{\partial a(\xi,t)}{\partial \xi}   \right]\, d\xi.
 \] 
 Furthermore, since
 \begin{equations}
 &\int_{\mathbb R}\left[ - \dfrac{a(\xi,t)}{|\xi|^{2\ell-2}}\dfrac{\partial b(\xi,t)}{\partial \xi} +  \dfrac{b(\xi,t)}{|\xi|^{2\ell-2}}\dfrac{\partial a(\xi,t)}{\partial \xi}   \right]\, d\xi =\\
& \int_{\mathbb R} \left[ - \dfrac{a(\xi,t)}{|\xi|^{\ell-1}} \frac{\partial }{\partial \xi}\left(\frac{b(\xi,t)}{|\xi|^{\ell-1}}\right) +  \dfrac{b(\xi,t)}{|\xi|^{\ell-1}}\frac{\partial }{\partial \xi}\left(\frac{a(\xi,t)}{|\xi|^{\ell-1}}\right)  \right]\, d\xi.
 \end{equations}
 \eqref{eq:inequality_ellg1} yields
 \[
   \int_{\mathbb R} \left[ - \dfrac{a(\xi,t)}{|\xi|^{\ell-1}} \frac{\partial }{\partial \xi}\left(\frac{b(\xi,t)}{|\xi|^{\ell-1}}\right) +  \dfrac{b(\xi,t)}{|\xi|^{\ell-1}}\frac{\partial }{\partial \xi}\left(\frac{a(\xi,t)}{|\xi|^{\ell-1}}\right)  \right]\, d\xi \le 0
 \]
Hence, if $1<\ell<\frac 32$, from \eqref{eq:evo_a2_b2} we get
\begin{equations}\label{eq:decay12}
\dfrac{\partial}{\partial t} \int_{\mathbb R} |\xi|^{-2\ell} |\hat h|^2 d\xi &= \sigma^2 \int_{\mathbb R}|\xi|^2 \left[\dfrac{a(\xi,t)}{|\xi|^{2\ell}}\dfrac{\partial b(\xi,t )}{\partial \xi} - \dfrac{b(\xi,t)}{|\xi|^{2\ell}}\dfrac{\partial a(\xi,t) }{\partial \xi} \right]d\xi - \lambda \int_{\mathbb R} |\xi|^{-2\ell} \xi \dfrac{\partial}{\partial \xi}|\hat h|^2 d\xi \le \\ 
&\lambda \int_{\mathbb R} |\xi|^{-2\ell} \xi \dfrac{\partial}{\partial \xi}|\hat h|^2 d\xi =  -\lambda(2\ell-1) \int_{\mathbb R} |\xi|^{-2\ell} |\hat h|^2 d\xi. 
\end{equations}
namely exponential convergence towards equilibrium in Energy distance of order $\ell$, when $1<\ell<\frac 32$,   at the explicit rate $\lambda(2\ell-1)$. This concludes the proof of the lemma.
\end{proof}

If $p=1$, as treated in \cite{MTZ,TT}, the Fokker-Planck equation \fer{eq:FP_protot_p12}
has a steady state which is  the inverse Gamma density
\begin{equation}
\label{eq:finf_FP_protot_p1}
f^\infty(x)  = \dfrac{\omega^\nu}{\Gamma(\nu)}x^{-1-\nu}\exp{\left[ -\frac{\omega}{x}\right]}, \qquad \nu = 1+\dfrac{2\lambda}{\sigma^2}, \omega = \frac{2\mu}{\sigma^2}.  
\end{equation}
In this case  the following lemma holds
\begin{lemma}\label{lem:energy2}
Let $f(x,t)$ be the solution to the Fokker--Planck equation \fer{eq:FP_protot_p12} with $p=1$, initial value $f_0(x) $ and equilibrium density $f^\infty(x)$, as given in \fer{eq:finf_FP_protot_p1}, such that $\mathcal{E}_\ell (f_0, f_\infty)$ is bounded for $1/2< \ell <3/2$. Then the Energy distance $\mathcal{E}_\ell (f(t), f_\infty)$ is exponentially decaying to zero at rate $\left[\sigma^2\frac{3-2\ell}4 +\lambda\right](2\ell-1)$.
\end{lemma}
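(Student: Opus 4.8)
\emph{Plan of proof.} I would follow the same route as in the proof of Lemma~\ref{lem:energy}, replacing the first-order Fourier computation available when $p=1/2$ by a second-order one and supplementing it with a weighted Hardy inequality. Since \fer{eq:FP_protot_p12} with $p=1$ is linear and the inverse Gamma density $f^\infty$ in \fer{eq:finf_FP_protot_p1} is a stationary solution, the difference $h(x,t)=f(x,t)-f^\infty(x)$ solves the same equation, and conservation of mass forces $\widehat h(0,t)=\int_{\R_+}h(x,t)\,dx=0$ for every $t\ge0$; this is the structural fact on which the whole argument rests. Taking the Fourier transform of \fer{eq:FP_protot_p12} with $p=1$ and using $\widehat{xh}=i\,\partial_\xi\widehat h$ and $\widehat{x^2h}=-\partial_\xi^2\widehat h$ gives
\[
\partial_t\widehat h(\xi,t)=\frac{\sigma^2}{2}\,\xi^2\,\partial_\xi^2\widehat h-\lambda\,\xi\,\partial_\xi\widehat h-i\mu\,\xi\,\widehat h .
\]
Writing $\widehat h=a+ib$ as in Lemma~\ref{lem:energy}, multiplying the equations for $a$ and $b$ by $2a$ and $2b$ and adding, the terms proportional to $\mu$ cancel and one obtains
\[
\partial_t|\widehat h|^2=\sigma^2\,\xi^2\big(a\,\partial_\xi^2 a+b\,\partial_\xi^2 b\big)-\lambda\,\xi\,\partial_\xi|\widehat h|^2 .
\]

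Next I would multiply this identity by $|\xi|^{-2\ell}$ and integrate over $\R$, so that the left-hand side becomes $\tfrac{d}{dt}\mathcal{E}_\ell(f(t),f^\infty)$. For the drift contribution, one integration by parts together with $\partial_\xi\big(\xi|\xi|^{-2\ell}\big)=(1-2\ell)|\xi|^{-2\ell}$ yields $-\lambda(2\ell-1)\,\mathcal{E}_\ell(f(t),f^\infty)$, exactly as for $p=1/2$. For the diffusion contribution, two integrations by parts — using $a\,\partial_\xi^2a+b\,\partial_\xi^2 b=\tfrac12\partial_\xi^2|\widehat h|^2-(\partial_\xi a)^2-(\partial_\xi b)^2$ and $\partial_\xi^2\big(|\xi|^{2-2\ell}\big)=(2-2\ell)(1-2\ell)|\xi|^{-2\ell}$ — give
\[
\sigma^2(1-\ell)(1-2\ell)\,\mathcal{E}_\ell(f(t),f^\infty)-\sigma^2\!\int_{\R}|\xi|^{2-2\ell}\big((\partial_\xi a)^2+(\partial_\xi b)^2\big)\,d\xi ,
\]
whose second, non-negative term is the one to be exploited. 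All boundary terms at $\xi=0$ vanish because $\widehat h(0)=0$ makes $|\widehat h|^2=O(\xi^2)$ near the origin while $2-2\ell>-1$ for $\ell<\tfrac32$, and the ones at infinity vanish by the decay of $\widehat h$ and $\partial_\xi\widehat h$.

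The decisive step is then a \emph{weighted Hardy inequality}: since $a(0)=b(0)=0$ and the weight exponent $2-2\ell$ is $<1$, the sharp estimate $\int_0^\infty t^{2-2\ell}|u'(t)|^2\,dt\ge\big(\tfrac{2\ell-1}{2}\big)^2\int_0^\infty t^{-2\ell}|u(t)|^2\,dt$, applied to $u=a$ and $u=b$ on $(0,\infty)$ and, by reflection, on $(-\infty,0)$, yields
\[
\int_{\R}|\xi|^{2-2\ell}\big((\partial_\xi a)^2+(\partial_\xi b)^2\big)\,d\xi\ \ge\ \Big(\tfrac{2\ell-1}{2}\Big)^{2}\mathcal{E}_\ell(f(t),f^\infty).
\]
Combining the drift and diffusion contributions, the coefficient in front of $\mathcal{E}_\ell(f(t),f^\infty)$ becomes $-\big[\lambda(2\ell-1)+\sigma^2\big(\tfrac{(2\ell-1)^2}{4}-(1-\ell)(1-2\ell)\big)\big]$, and since $\tfrac{(2\ell-1)^2}{4}-(1-\ell)(1-2\ell)=\tfrac{(2\ell-1)(3-2\ell)}{4}$ this equals $-(2\ell-1)\big[\sigma^2\tfrac{3-2\ell}{4}+\lambda\big]$. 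Gronwall's lemma then gives $\mathcal{E}_\ell(f(t),f^\infty)\le\mathcal{E}_\ell(f_0,f^\infty)\,e^{-(2\ell-1)[\sigma^2(3-2\ell)/4+\lambda]\,t}$, with a strictly positive rate for every $\tfrac12<\ell<\tfrac32$, which is the assertion.

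I expect the main obstacles to be twofold. First, one must ensure that the integrals involved — in particular $\int_{\R}|\xi|^{2-2\ell}\big((\partial_\xi a)^2+(\partial_\xi b)^2\big)\,d\xi$ — are finite and that the integrations by parts are legitimate; I would handle this, as in the proof of Lemma~\ref{lem:energy}, by first establishing the differential inequality for smooth, rapidly decaying data (for which $\widehat h$ has all the needed smoothness and decay) and then passing to the limit, using the convergence in $\dot H_{-1}$ already obtained in Section~\ref{sec:equi}. Second, one must invoke the Hardy inequality with its \emph{sharp} constant $\big(\tfrac{2\ell-1}{2}\big)^2$, since only this value reproduces the rate in the statement — and it is precisely conservation of mass, $\widehat h(0)=0$, that makes Hardy applicable when $2-2\ell<1$. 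Note finally that $(1-\ell)(1-2\ell)$ is favourable only for $\ell\in(\tfrac12,1)$; for $\ell\in(1,\tfrac32)$ it is the negative gradient term, estimated via Hardy, that keeps the overall coefficient negative.
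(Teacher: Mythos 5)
Your proof is correct and takes essentially the same route as the paper, whose own proof of this lemma consists of the Fourier-transformed equation plus a deferral of the computation to \cite{MTZ,TT}: the identity for $\partial_t|\hat h|^2$, the integration against $|\xi|^{-2\ell}$, and the weighted Hardy inequality with sharp constant $\left(\tfrac{2\ell-1}{2}\right)^2$ (legitimised by mass conservation, $\hat h(0,t)=0$) are precisely the ingredients needed, and your algebra $\tfrac{(2\ell-1)^2}{4}-(1-\ell)(1-2\ell)=\tfrac{(2\ell-1)(3-2\ell)}{4}$ reproduces the stated rate exactly. Your write-up, including the discussion of boundary terms and of which term is dissipative on $(\tfrac12,1)$ versus $(1,\tfrac32)$, is in fact more detailed than the paper's.
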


\begin{proof}

Let $h(x,t) = f(x,t) -f^\infty(x)$.
The Fourier transformed version of this equation reads
\[
\dfrac{\partial}{\partial t}\hat h(x,t) = \dfrac{\sigma^2}{2} \xi^2 \dfrac{\partial^2}{\partial \xi^2} \hat h(\xi,t) - \lambda \xi \dfrac{\partial^2}{\partial \xi}\hat h(\xi,t) - i\mu \xi \hat h(\xi,t), 
\]
which, provided  the initial value $h_0(x) \in L^2(\R_+)\cap \dot{H}_\ell(\R_+)$  gives, proceeding as in \cite{MTZ,TT}, the  inequality
\[
\dfrac{\partial}{\partial t}\int_{\mathbb R}|\xi|^{-2\ell}|\hat h|^2d\xi \le -(2\ell-1)\left[\sigma^2 \dfrac{3-2\ell}{4} + \lambda \right] \int_{\mathbb R}|\xi|^{-2\ell}|\hat h|^2 d\xi. 
\]
Therefore, when $1/2< \ell < 3/2 $, one has exponential convergence of the solution towards the inverse Gamma steady state defined in \eqref{eq:finf_FP_protot_p1} in the Energy distance $\mathcal{E}_\ell$. 
\end{proof}

\subsubsection{{The time dependent case}}
Let us now pass to consider, for $p=1/2$, the Fokker--Planck equations \fer{eq:FP-gen}, and let us evaluate the time evolution of the Energy distance of order $\ell$, with $1<\ell<3/2$ to the Fourier transformed version of equation \fer{eq:chiave3}, where $A(t)$ and $B(t)$ are given as in \fer{eq:AB}. We recall that 
\be\label{eq:fou12}
\int_\R \frac{\partial f_k^\infty(x)}{\partial x} e^{-i\xi x}\, dx = i\xi \hat f_k^\infty(\xi); \quad \int_\R \frac{\partial }{\partial x} (x\, f_k^\infty(x))e^{-i\xi x}\, dx = -\xi \frac{\partial\hat f_k^\infty(\xi)}{\partial\xi}.
\ee
It is immediate to verify that the conclusions of Lemma \ref{lem:energy} remain valid even if the coefficients of the Fokker--Planck equation are time dependent, and that in this range of $\ell$ the contribution of the diffusion term is non positive. Hence, {the results of the time-independent case provided in Section \ref{subsect:time_in} lead to the following inequality if $p= 1/2$}
\begin{equations}\label{eq:p1}
\frac d{dt} \int_\R \frac{|\hat f_k(\xi) -\hat f_k^\infty(\xi)|^2}{|\xi|^{2\ell}}\, d\xi &\le -{\lambda_k}(t)(2\ell-1) \int_\R \frac{|\hat f_k(\xi) -\hat f_k^\infty(\xi)|^2}{|\xi|^{2\ell}}\, d\xi +\\
&  2{A_k}(t)  \int_\R \frac{|\hat f_k(\xi) -\hat f_k^\infty(\xi)|}{|\xi|^{2\ell}}\,\left| \xi \frac{\partial\hat f_k^\infty(\xi)}{\partial\xi}\right|\,d\xi +\\
&2 {B_k}(t)  \int_\R \frac{|\hat f_k(\xi) -\hat f_k^\infty(\xi)|}{|\xi|^{2\ell}}\,  \left|\xi \hat f_k^\infty(\xi)\right|\,d\xi.
\end{equations}
To conclude, it is enough to determine a bound for the integrals on the second and third line of \fer{eq:p1}.  By Cauchy-Schwartz inequality we have
\begin{equations}\label{eq:cs}
 & \int_\R \frac{|\hat f_k(\xi) -\hat f_k^\infty(\xi)|}{|\xi|^{2\ell}}\,  |\xi \hat f_k^\infty(\xi)|\,d\xi  =
  \int_\R \frac{|\hat f_k(\xi) -\hat f_k^\infty(\xi)|}{|\xi|^{\ell}}\,  \frac{|\hat f_k^\infty(\xi)|}{|\xi|^{\ell-1}}\,d\xi \le \\
  &\left( \int_\R \frac{|\hat f_k(\xi) -\hat f_k^\infty(\xi)|^2}{|\xi|^{2\ell}}\,d\xi\right)^{1/2}  \left( \int_\R \frac{|\hat f_k^\infty(\xi)|^2}{|\xi|^{2\ell-2}}\,d\xi \right)^{1/2} .
\end{equations}
On the other hand, recalling that $f_k^\infty$ is a probability density, so that  $|\hat f_k^\infty(\xi)| \le 1$,  for any given constant $R>0$ we have
\begin{equations}\label{eq:stima}
 & \int_\R \frac{|\hat f_k^\infty(\xi)|^2}{|\xi|^{2\ell-2}}\,d\xi \le   \int_{|\xi| \le R} \frac{|\hat f_k^\infty(\xi)|^2}{|\xi|^{2\ell-2}}\,d\xi + \int_{|\xi| > R} \frac{|\hat f_k^\infty(\xi)|^2}{|\xi|^{2\ell-2}}\,d\xi \le \\ 
 &  \int_{|\xi| \le R} \frac{1}{|\xi|^{2\ell-2}}\,d\xi + \frac 1{R^{2\ell-2}} \int_{|\xi| > R} {|\hat f_k^\infty(\xi)|^2}\,d\xi \le \\
 & \frac 2{3-2\ell} R^{3-2\ell} + \frac 1{R^{2\ell-2}} \int_{\R} {|\hat f_k^\infty(\xi)|^2}\,d\xi  = 
  \frac 2{3-2\ell} R^{3-2\ell} +
\frac{2\pi}{R^{2\ell-2}} \int_{\R_+} {|f_k^\infty(x)|^2}\,dx, 
\end{equations}
thanks to the Plancherel equivalence. Hence, optimizing over $R$ in \fer{eq:cs} we obtain
\be\label{eq:stima12}
\int_\R \frac{|\hat f_i^\infty(\xi)|^2}{|\xi|^{2\ell-2}}\,d\xi \le C_\ell \| f_k^\infty\|_{L^2(\R_+)}^{3/2-\ell},
\ee
where the constant $C_\ell$ is given by
\be\label{eq:con12}
C_\ell = (2\pi)^{3-2\ell} \left[ 3 \left(\frac{2\ell-2}3 \right)^{3-2\ell} + \left(\frac 3 {2\ell-2}\right)^{2\ell-2}\right], 
\ee
which is positive for any $\ell>1$. Hence, we get
\be\label{eq:sti12}
 \int_\R \frac{|\hat f_k(\xi) -\hat f_k^\infty(\xi)|}{|\xi|^{2\ell}}\,  |\xi \hat f_k^\infty(\xi)|\,d\xi  \le \left(C_\ell \| f^\infty_k\|_{L^2(\R_+)}^{3/2-\ell} \right)^{1/2} \left( \int_\R \frac{|\hat f_k(\xi) -\hat f_k^\infty(\xi)|^2}{|\xi|^{2\ell}}\,d\xi\right)^{1/2}.
\ee
Analogous result can be obtained for the integral
\[
 \int_\R \frac{|\hat f_k(\xi) -\hat f_k^\infty(\xi)|}{|\xi|^{2\ell}}\,\left| \xi \frac{\partial\hat f_k^\infty(\xi)}{\partial\xi}\right|\,d\xi .
\]
In this case, from the Plancherel equivalence we get
\[
\int_\R \left|\frac{\partial\hat f_k^\infty(\xi)}{\partial\xi}\right|^2\,d\xi = 2\pi \int_{\R_+} x^2| f_k^\infty(x)|^2\, dx,
\]
which is bounded for any $p \in [1/2,1]$ from \eqref{eq:equi-Gamma},  we conclude with the following inequality
\be\label{eq:sti1}
 \int_\R \frac{|\hat f_k(\xi) -\hat f_k^\infty(\xi)|}{|\xi|^{2\ell}}\,  |\xi \hat f_k^\infty(\xi)|\,d\xi  \le \left(C_\ell \| xf_k^\infty\|_{L^2(\R_+)}^{3/2-\ell} \right)^{1/2} \left( \int_\R \frac{|\hat f_k(\xi) -\hat f_k^\infty(\xi)|^2}{|\xi|^{2\ell}}\,d\xi\right)^{1/2}.
\ee
Analogous results are obtained if $p = 1$, for which we get
\begin{equations}\label{eq:p11}
&\frac d{dt} \int_\R \frac{|\hat f_k(\xi) -\hat f_k^\infty(\xi)|^2}{|\xi|^{2\ell}}\, d\xi \\
&\quad\le -\left[ \sigma^2_k(t) \dfrac{3-2\ell}{4} + {\lambda_k}(t)\right](2\ell-1) \int_\R \frac{|\hat f_k(\xi) -\hat f_k^\infty(\xi)|^2}{|\xi|^{2\ell}}\, d\xi +\\
&\qquad  2{A_k}(t)  \int_\R \frac{|\hat f_k(\xi) -\hat f_k^\infty(\xi)|}{|\xi|^{2\ell}}\,\left| \xi \frac{\partial\hat f_k^\infty(\xi)}{\partial\xi}\right|\,d\xi +\\
&\qquad2 {B_k}(t)  \int_\R \frac{|\hat f_k(\xi) -\hat f_k^\infty(\xi)|}{|\xi|^{2\ell}}\,  \left|\xi \hat f_k^\infty(\xi)\right|\,d\xi.
\end{equations}

Finally, we have proven the result
\begin{theorem}\label{th:p-12}
Let $f_k(x,t)$, $k=1,2$ be the solutions of the Fokker--Planck equations \fer{eq:FP-gen}, with initial values the probability densities $f_{k,0}$  such that, for $p= \frac 1 2, 1$ and $1< \ell<\frac 3 2$
\[
\|f_{k,0} - f_k^\infty\|_{\dot{H}_{-\ell}} \ < +\infty, \qquad  k = 1,2, 
\]
where $f_k^\infty$ are the Gamma equilibrium solutions  \fer{eq:equilibrium_G} or  inverse Gamma equilibrium solutions \eqref{eq:equilibrium_invG}.  Then, for $p=1/2, 1$ the solutions to \fer{eq:FP-gen}, and consequently of system \eqref{intro:systFP} satisfy
\[
 \|f_{k}(t) - f_k^\infty\|_{\dot{H}_{-\ell}}\to 0 ,\qquad  k = 1,2,
\]
 The rate in the case $p =1/2$ is determined by inequality \fer{eq:sti12}, while in the case $p=1$ is determined by inequality \fer{eq:sti1}.
\end{theorem}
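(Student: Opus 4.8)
\emph{The plan.} I would follow the scheme of Theorem~\ref{th:p-gen}, replacing the Cram\'er distance by the normalised Energy distance $\mathcal E_\ell$ of order $\ell\in(1,\tfrac32)$ and the elementary integration in $[0,x]$ by the Fourier-side estimates of Lemmas~\ref{lem:energy}--\ref{lem:energy2}. Fix $k\in\{1,2\}$, set $h_k(x,t)=f_k(x,t)-f_k^\infty(x)$ and $y_k(t)=\norm{h_k(t)}_{\dot H_{-\ell}}=\mathcal E_\ell(f_k(t),f_k^\infty)^{1/2}$. The starting point is the exact identity~\fer{eq:chiave3} for $h_k$, which holds for every $p\in[\tfrac12,1]$ since it uses only the stationary relation~\fer{eq:chiave}. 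Taking its Fourier transform and using~\fer{eq:fou12}, I note that the homogeneous part is precisely the operator handled in Lemma~\ref{lem:energy} (for $p=\tfrac12$) or Lemma~\ref{lem:energy2} (for $p=1$): its contribution to $\frac{d}{dt}\int_\R|\xi|^{-2\ell}|\hat h_k|^2\,d\xi$ is $\le-\rho_\ell^{(k)}(t)\int_\R|\xi|^{-2\ell}|\hat h_k|^2\,d\xi$, with $\rho_\ell^{(k)}(t)=\lambda_k(t)(2\ell-1)$ when $p=\tfrac12$ and $\rho_\ell^{(k)}(t)=\bigl[\sigma_k^2(t)\tfrac{3-2\ell}{4}+\lambda_k(t)\bigr](2\ell-1)$ when $p=1$; the two remaining terms of~\fer{eq:chiave3} become, via~\fer{eq:fou12}, $-A_k(t)\,\xi\,\partial_\xi\hat f_k^\infty$ and $iB_k(t)\,\xi\,\hat f_k^\infty$. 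I would first check that the sign arguments of Lemmas~\ref{lem:energy}--\ref{lem:energy2} — realising $|h_k*\kappa_{\ell-1}|^2$, suitably normalised, as a probability density on $\R$ whose mass on $\R_+$ exceeds its mass on $\R_-$, so that the relevant weighted integral is non-negative — are pointwise in $t$ and hence unaffected by the time dependence of the coefficients, exactly as remarked before~\fer{eq:p1}.

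This delivers the differential inequality~\fer{eq:p1} together with its $p=1$ counterpart, so it remains to bound the two source integrals. By Cauchy--Schwarz, as in~\fer{eq:cs}, each is at most $y_k(t)$ times $\bigl(\int_\R|\xi|^{-(2\ell-2)}|\hat f_k^\infty(\xi)|^2\,d\xi\bigr)^{1/2}$, respectively $\bigl(\int_\R|\xi|^{-(2\ell-2)}|\partial_\xi\hat f_k^\infty(\xi)|^2\,d\xi\bigr)^{1/2}$. Using $|\hat f_k^\infty|\le1$, respectively $|\partial_\xi\hat f_k^\infty|\le m_k^\infty<\infty$, the Plancherel identities $\int_\R|\hat f_k^\infty|^2\,d\xi=2\pi\norm{f_k^\infty}_{L^2}^2$ and $\int_\R|\partial_\xi\hat f_k^\infty|^2\,d\xi=2\pi\norm{xf_k^\infty}_{L^2}^2$ (both finite for all $p\in[\tfrac12,1]$, the inverse-Gamma case $p=1$ being the borderline one), splitting $\int_{|\xi|\le R}+\int_{|\xi|>R}$ and optimising in $R$, I would recover exactly the bounds~\fer{eq:sti12} and~\fer{eq:sti1}. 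This is the point where $1<\ell<\tfrac32$ is needed: $|\xi|^{-(2\ell-2)}$ must be integrable near $\xi=0$ and the constant $C_\ell$ of~\fer{eq:con12} must be positive.

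Collecting these estimates, $y_k$ obeys the Bernoulli-type inequality
\[
\frac{d}{dt}\,y_k(t)^2 \;\le\; -\rho_\ell^{(k)}(t)\,y_k(t)^2 \,+\, N_k(t)\,y_k(t),\qquad N_k(t)=2\bigl(a_k\,|A_k(t)|+b_k\,|B_k(t)|\bigr),
\]
with $a_k,b_k>0$ the finite constants produced by~\fer{eq:sti12}--\fer{eq:sti1}. Since $m_k(t)\to m_k^\infty$ by the analysis of Section~\ref{sect:3}, the coefficients~\fer{eq:values} converge to~\fer{eq:values_large}, hence $A_k(t),B_k(t)\to0$ by~\fer{eq:AB} and $N_k(t)\to0$, while $\lambda_k(t)\to\lambda_{k,\infty}>0$ and $\sigma_k^2(t)\to\sigma_{k,\infty}^2>0$ give $\rho_\ell^{(k)}(t)\ge\rho^*>0$ eventually (as in Theorem~\ref{th:p-gen}, the initial transient is absorbed into the constant, under $\lambda_k(t)\ge\lambda^*>0$). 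On $\{y_k>0\}$ I would divide by $2y_k$ to obtain the linear inequality $\frac{d}{dt}y_k\le-\tfrac12\rho_\ell^{(k)}y_k+\tfrac12N_k$ and integrate it with the factor $\exp\bigl(\tfrac12\int_0^t\rho_\ell^{(k)}\bigr)$, exactly as in Theorem~\ref{th:p-gen}, to get
\[
y_k(t)\;\le\; y_k(0)\,e^{-\frac12\int_0^t\rho_\ell^{(k)}(\tau)\,d\tau}\,+\,\frac12\int_0^t e^{-\frac12\int_s^t\rho_\ell^{(k)}(\tau)\,d\tau}\,N_k(s)\,ds.
\]
By hypothesis $y_k(0)=\norm{f_{k,0}-f_k^\infty}_{\dot H_{-\ell}}<\infty$, and since $\rho_\ell^{(k)}\ge\rho^*$ and $N_k(s)\to0$ the right-hand side tends to $0$ as $t\to\infty$; this proves $\norm{f_k(t)-f_k^\infty}_{\dot H_{-\ell}}\to0$ for $k=1,2$, the effective rate being $\rho^*$ (up to the decay rate of $N_k$, itself exponential because the Lotka--Volterra coexistence point is linearly stable under $\alpha(\gamma K-\delta)>0$), with the constants entering $N_k$ precisely those of~\fer{eq:sti12} for $p=\tfrac12$ and of~\fer{eq:sti1} for $p=1$.

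\emph{The hard part.} The genuinely new work, beyond reusing the constant-coefficient Lemmas~\ref{lem:energy}--\ref{lem:energy2}, lies in controlling the inhomogeneous terms $A_k\,\partial_x(xf_k^\infty)+B_k\,\partial_x f_k^\infty$ by the $\dot H_{-\ell}$ norm of $h_k$ itself, so that they enter the energy identity only \emph{linearly} and cannot overturn the quadratic dissipation; this is also what forces $1<\ell<\tfrac32$. The other delicate point is the $p=1$ case, where one must confirm that the $\xi$-integration by parts of the operator $\xi^2\partial_\xi^2$ that generates the extra $\sigma^2\tfrac{3-2\ell}{4}$ in Lemma~\ref{lem:energy2} is unaffected by the lower-order source and by the time dependence of $\sigma_k^2(t)$. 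Finally, a routine but unavoidable step is the propagation of integrability: one needs $h_k(\cdot,t)\in L^2(\R_+)\cap\dot H_{-(2\ell-2)}(\R_+)$ for the probability-density normalisation used in Lemmas~\ref{lem:energy}--\ref{lem:energy2} to be legitimate, which follows from $\mathcal E_\ell(f_{k,0},f_k^\infty)<\infty$, the boundedness of $\hat h_k$ as a difference of probability densities, and a low-frequency interpolation valid since $0<2\ell-2<1$.
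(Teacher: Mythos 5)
Your proposal is correct and follows essentially the same route as the paper: Fourier-transforming the difference equation \eqref{eq:chiave3}, reusing the sign/dissipation arguments of Lemmas~\ref{lem:energy}--\ref{lem:energy2} pointwise in time for the homogeneous part, bounding the two source terms by Cauchy--Schwarz plus the $R$-splitting and optimisation that yield \eqref{eq:sti12}--\eqref{eq:sti1}, and closing with the same Bernoulli-type integration used in Theorem~\ref{th:p-gen}. The only difference is that you write out the final ODE integration explicitly (which the paper leaves implicit) and you correctly note the pointwise bound $|\partial_\xi\hat f_k^\infty|\le m_k^\infty$ needed for the low-frequency part of the derivative term.
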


{
\begin{remark}
We observe that if $p=1/2$ the functions $f_k^\infty(x)$ and $x f_k^\infty(x)$ are always in $L^2(\mathbb R_+)$ since all the moments are finite. In the case $p=1$ the quantities $f_k^\infty(x)$ and $x f_k^\infty(x)$ are in $L^2(\mathbb R_+)$ under the hypotesis of bounded variance, holding under the assumptions $\bar\nu^\infty_k>3$, see \eqref{eq:nu_p1}.
\end{remark}
}

\subsection{Convergence to quasi-equilibrium densities}\label{sec:quasi-lim}

The results of the previous sections allow  to study the role played by the quasi-equilibrium solutions \fer{eq:gen-Gamma} defined in Section \ref{sect:41}. Indeed, as far as Cramér distance is concerned, triangular inequality gives
\be\label{eq:tria}
d(f_k(t), f_k^q(t)) ^{1/2} \le d(f_k(t), f_k^\infty) ^{1/2}+ d(f_k^\infty, f_k^q(t)) ^{1/2}.
\ee
Now, while the decay of the solutions $f_k(x,t)$ towards equilibrium has been obtained in the previous Sections, the decay of the quasi-equilibria $f_k^q/x,t)$ towards equilibrium can be easily found by resorting to the notion of relative entropy between probability densities $f$ and $g$ , expressed by
\[
H(f|g) = \int_{\R_+} f(x) \log \frac{f(x)}{g(x)} \, dx.
\]
In the present case, for $k=1,2$, we get from \eqref{eq:gen-Gamma}-\eqref{eq:equi-Gamma} 
\[
H(f_k^\infty| f_k^q(t)) = \int_{\R_+} f_k^\infty(x) \left[ \log \dfrac{C_k^{p,\infty}}{C_k^p} - \dfrac{x^{2-2p}}{1-p} \left(\dfrac{\lambda_k}{\sigma^2_k} - \dfrac{\lambda_{k,\infty}}{\sigma^2_{k,\infty}} \right) - \dfrac{2x^{1-2p}}{2p-1} \left(\dfrac{\mu_k}{\sigma_k^2}-\dfrac{\mu_{k,\infty}}{\sigma^2_{k,\infty}} \right) \right]dx. 
\]
Since $\lambda_k \to \lambda_{k,\infty}$, $\mu_k \to \mu_{k,\infty}$, $\sigma^2_k \to \sigma^2_{k,\infty}$ as $t\to +\infty$ and since $\int_{\mathbb R_+}f_k^\infty(x) x^{\theta}dx<+\infty$ for any $\theta \in [-1,1]$ we get that 
\[
H(f_k^\infty| f_k^q(t))  \to 0, \qquad t\to+\infty. 
\]
Therefore, thanks to the Czisar-Kullback inequality 
\[
\| f^\infty_k-f^q_k\|_{L^1} \to 0^+, 
\]
which controls the $\dot H_{-\ell}$ norm. Hence, $f_k \to f_k^q$ for any $1\le\ell<\frac 3 2$.

\section{Numerical tests}\label{sect:5}
In this section, we provide several numerical tests to illustrate the obtained decay towards equilibrium for the system of Fokker-Planck equations \eqref{intro:systFP} in terms of the energy distance \eqref{eq:norm}. In particular, we illustrate the consistency of the decays through the class of structure preserving scheme introduced in \cite{PZ} to approximate the evolution of the distribution functions $f_1,f_2$. These methods are capable to reproduce large times statistical properties of the exact steady state with arbitrary accuracy, together with the preservation of positivity of the solution and a consistent entropy dissipation. In all the subsequent tests we initialise the parameters defined in Table \ref{Table:params}. {In all the subsequent tests we considered a domain $[0,L]$, $L=50$, discretized by means of $N = 1001$ gridpoints. The discretization of the considered time interval has time step $\Delta t >0$ such that $\Delta t = \Delta x/2$ to comply with positivity preservation os semi-implicit schemes discussed in \cite{PZ}. At the boundary $x =0$ we always considered a no-flux boundary condition }

\begin{table}
  \centering
  \caption{Set of parameters considered.}
  \begin{tabular}{|c|c|c|}
    \hline
    \textbf{Parameter} & \textbf{Value} & \textbf{Meaning}\\
    \hline\hline
    $\alpha$ & 1.0 &  Birth rate $f_1$ \\ 
    $\beta$ & 0.5& Removal rate due to contact between $f_1$ and $f_2$ \\
    $\gamma$ & 0.15 & Birth rate $f_2$ due to contact with $f_1$\\
    $\sigma_1$ & 0.05 & Diffusion strength $f_1$ \\
    $\sigma_2$ & 0.05 & Diffusion strength $f_2$ \\
    $K$ & 0.01 & Carrying capacity  \\
    $\delta$ & 0.5 & Death rate $f_2$ \\
    $\chi$ & 0 & Intensity redistribution particles $f_1$ \\
    $\theta$ & 0 & Intensity redistribution particles $f_2$\\
    $\nu$ & 1.0 &  Birth rate $f_1$ \\
     $\mu$ & 10 & Threshold death rate $f_2$ \\
     $\delta = \gamma\mu-\nu$ & $0.5$ & Death rate $f_2$\\
    \hline\hline
  \end{tabular}
  \label{Table:params}
\end{table}

\subsection{Test 1: convergence to the quasi-equilibrium densities}
In this section we show the evolution of the energy distance $\mathcal E_\ell(f_k,f_k^q)$, $k=1,2$, defined in \eqref{eq:norm} as discussed in Section \ref{sect:2}. In more detail, we will consider the initial distributions
\[
\begin{split}
f_1(x,0) = 
\begin{cases}
1 & x \in [m_1(0) - \frac 1 2, m_1(0) + \frac 1 2], \\
0 & \textrm{elsewhere}
\end{cases}
\\
f_2(x,0) = 
\begin{cases}
1 & x \in [m_2(0) - \frac 1 2, m_2(0) + \frac 1 2], \\
0 & \textrm{elsewhere}
\end{cases}
\end{split}
\]
where $m_1(0) = 4$, $m_2(0) = 3$. In Figure \ref{fig:quasi_p12}-\ref{fig:quasi_p1} we show the dynamics of the Energy distance for several choices of $\ell = 0.6,0.7,0.8$. The evolution of the densities $f_1,f_2$ have been approximated through a second order semi-implicit structure preserving scheme over the domain $[0,L]$, $L = 50$ discretized with $N = 1001$ gridpoints, we point the interested reader to \cite{PZ} for a detailed description. We integrated the dynamics over the time horizon $[0,50]$ with a time step $\Delta t = \Delta x/2$. The damping of the Energy distance can be observed, with the upper bounds derived in Section \ref{sect:4} indicating that the convergence becomes exponential after a transient regime.
 
 In FIgure \ref{fig:f_fq} we depict the approximation of $f_1,f_2$ obtained with the SP scheme as solution to \eqref{intro:systFP} and the coefficients \eqref{eq:values} and of the quasi-equilibria $f_1^q,f_2^q$ at three times $t_1 = 1$, $t_2 = 10$, $t_3 = 20$. In the top row we consider $p = 1/2$ and in the bottom row $p = 1$, the quasi-equilibria are therefore defined by \eqref{eq:quasi_eq_G} if $p = 1/2$ and by \eqref{eq:quasi_eq_invG} if $p=1$. We can observe how for large times the two distributions converge towards the same limit. 
 
\begin{figure}
\includegraphics[scale = 0.25]{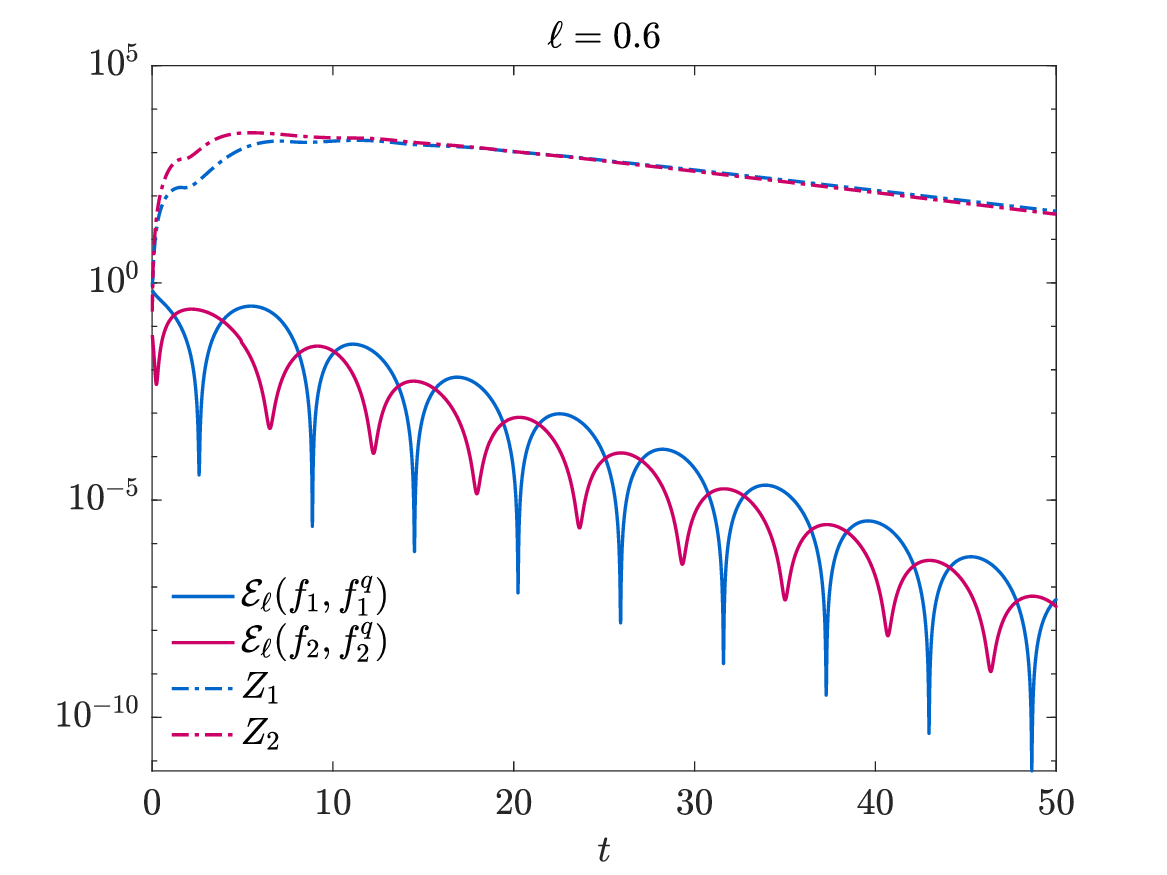}
\includegraphics[scale = 0.25]{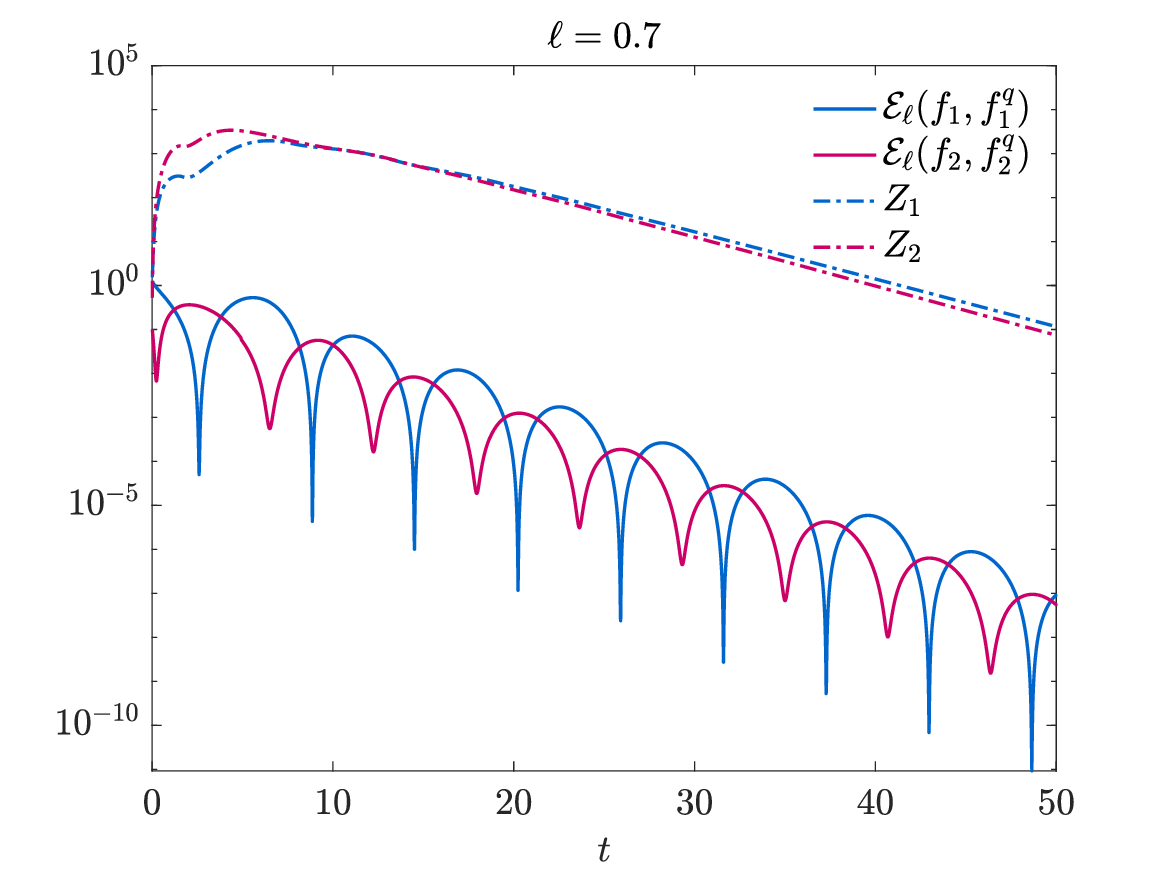}
\includegraphics[scale = 0.25]{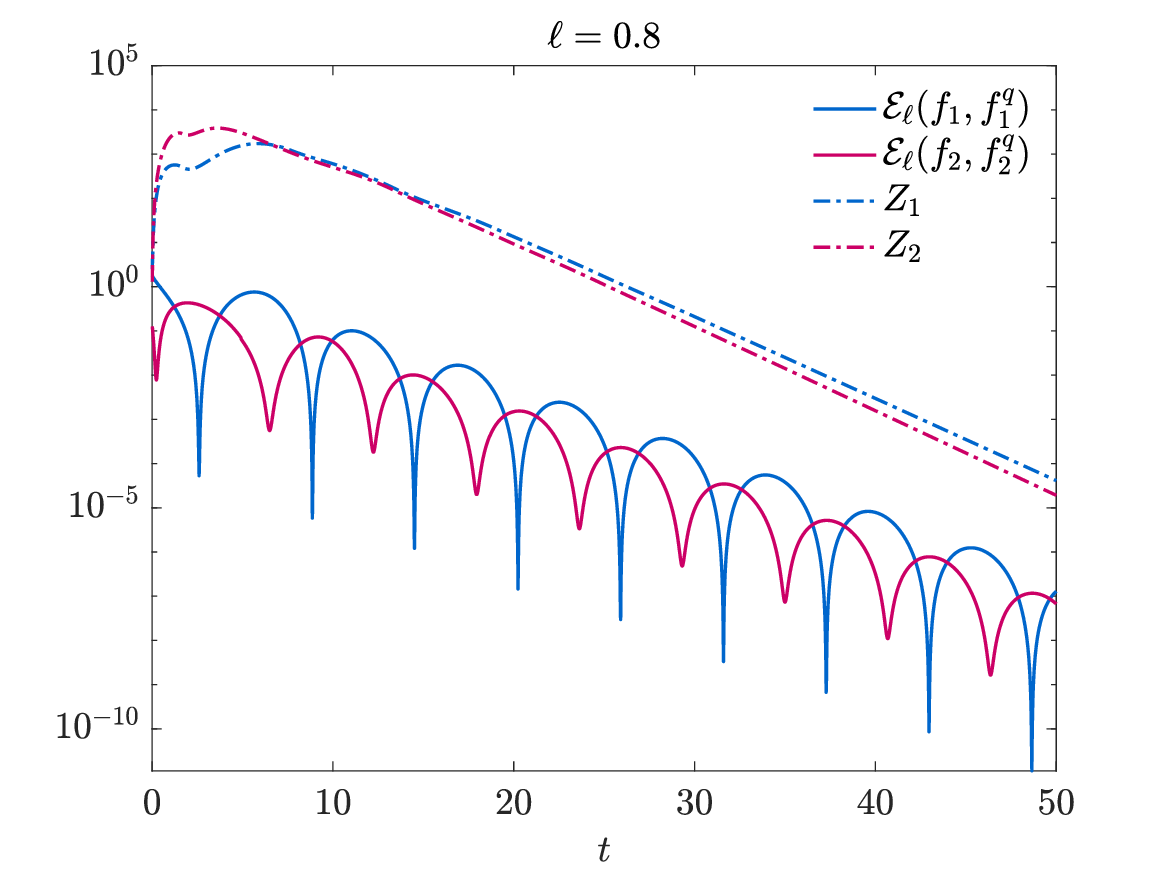}
\caption{Evolution of the Energy distance  $\mathcal E_\ell(f_1,f_1^q)$ and $\mathcal E_\ell(f_2,f_2^q)$ from the solution to  \eqref{intro:systFP} with $p = 1/2$ and the coefficients \eqref{eq:values}. The upper bounds are the ones obtained in Section \ref{sect:4}.  }
\label{fig:quasi_p12}
\end{figure}

\begin{figure}
\includegraphics[scale = 0.25]{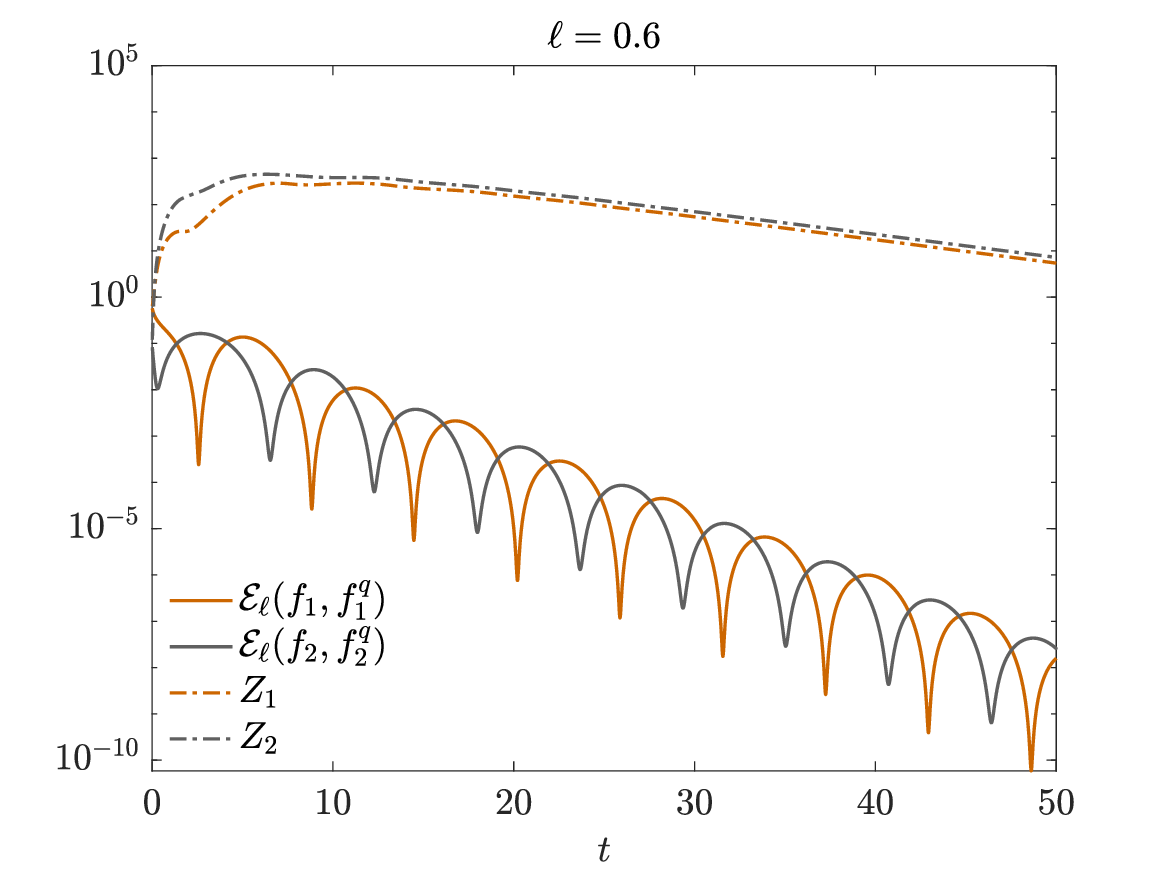}
\includegraphics[scale = 0.25]{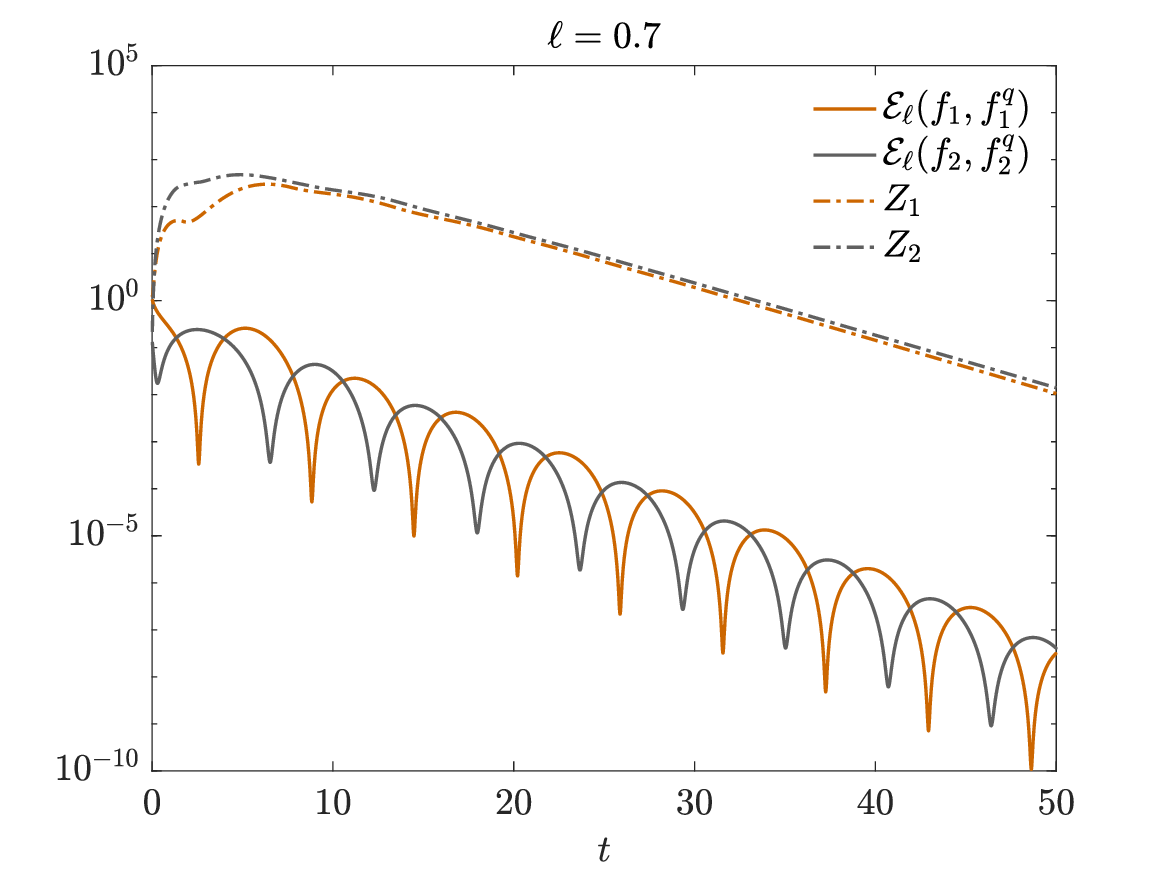}
\includegraphics[scale = 0.25]{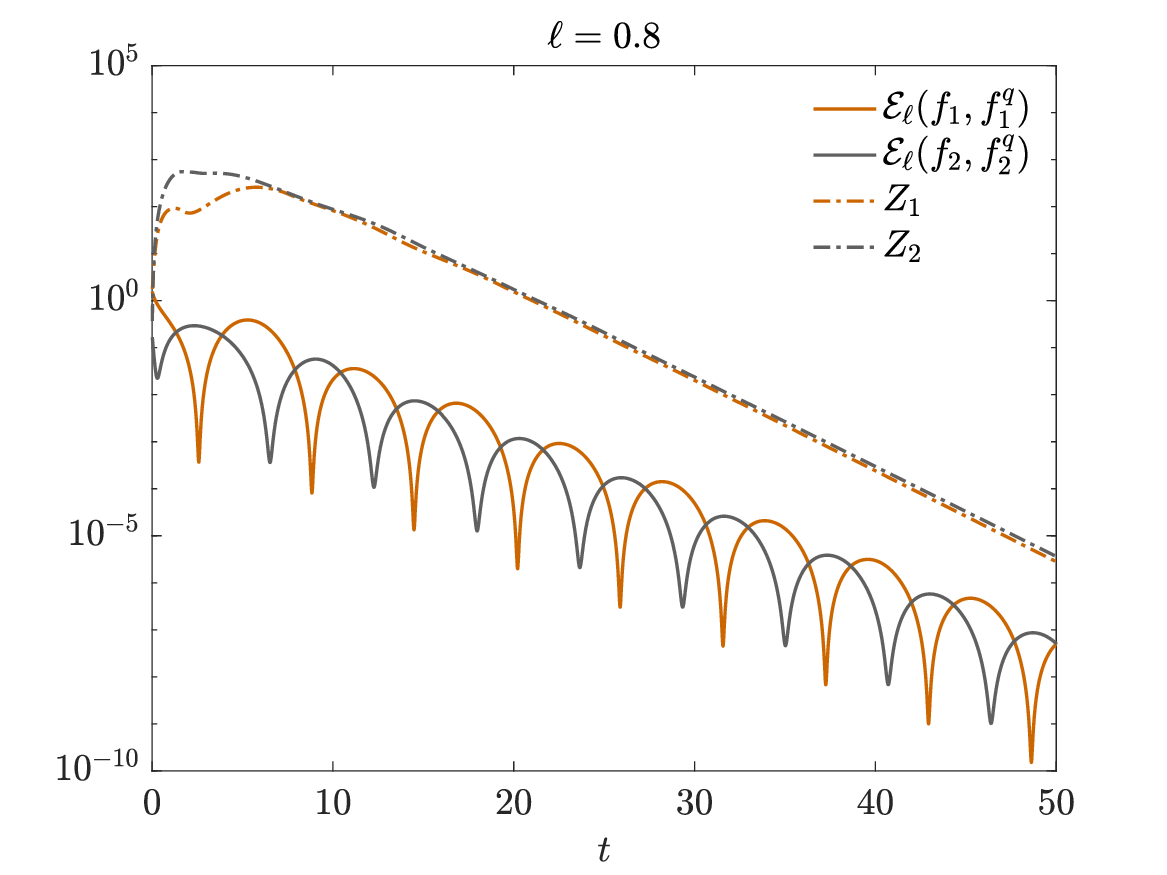}
\caption{Evolution of the Energy distance  $\mathcal E_\ell(f_1,f_1^q)$ and $\mathcal E_\ell(f_2,f_2^q)$ from the solution to  \eqref{intro:systFP} with $p = 1$ and the coefficients \eqref{eq:values}. The upper bounds are the ones obtained in Section \ref{sect:4}.}
\label{fig:quasi_p1}
\end{figure}

\begin{figure}
\centering
\includegraphics[scale = 0.35]{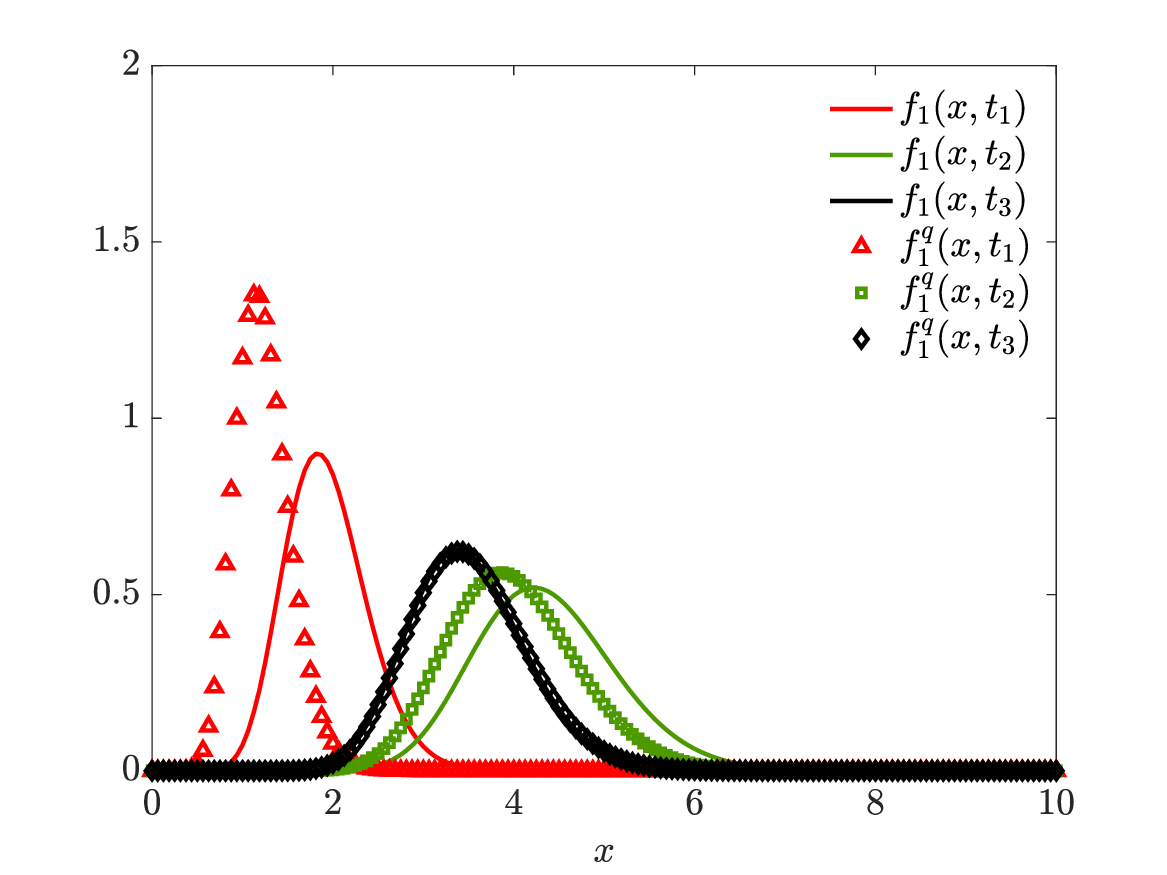}
\includegraphics[scale = 0.35]{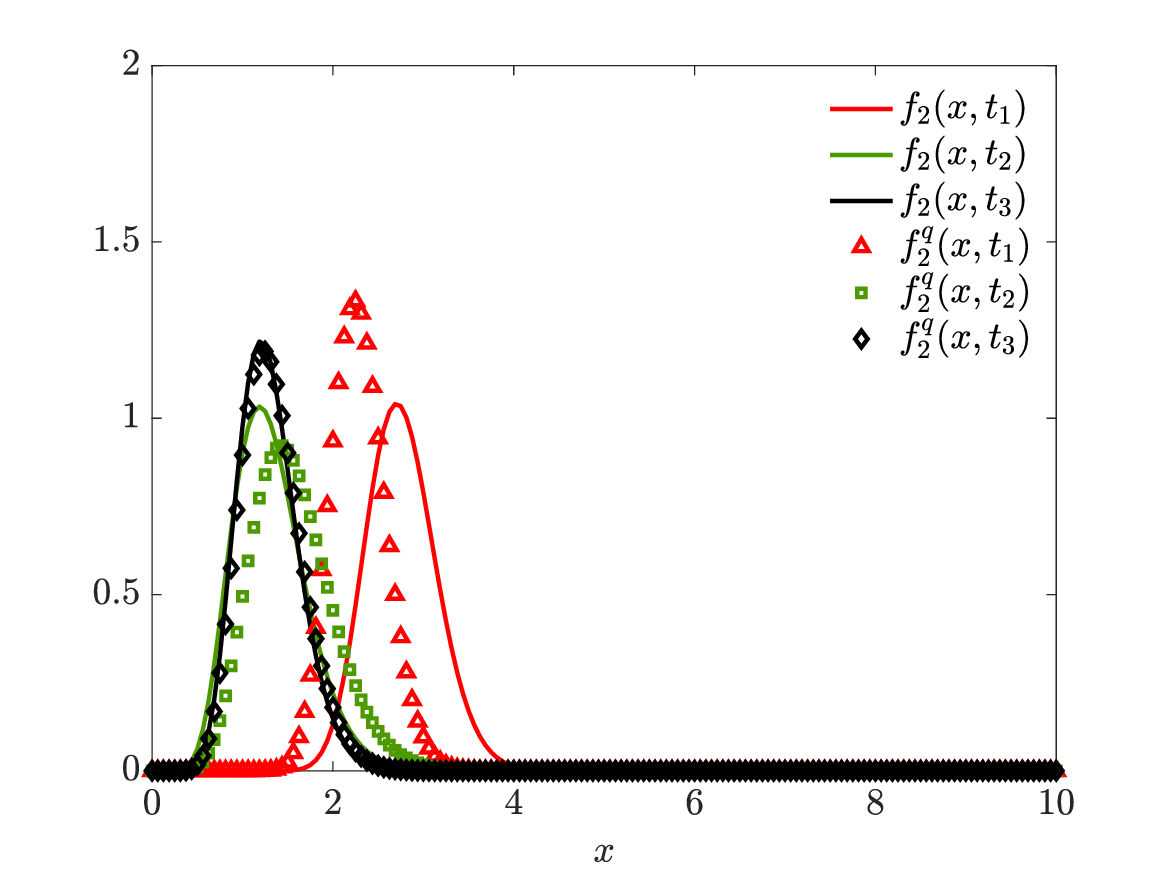}\\
\includegraphics[scale = 0.35]{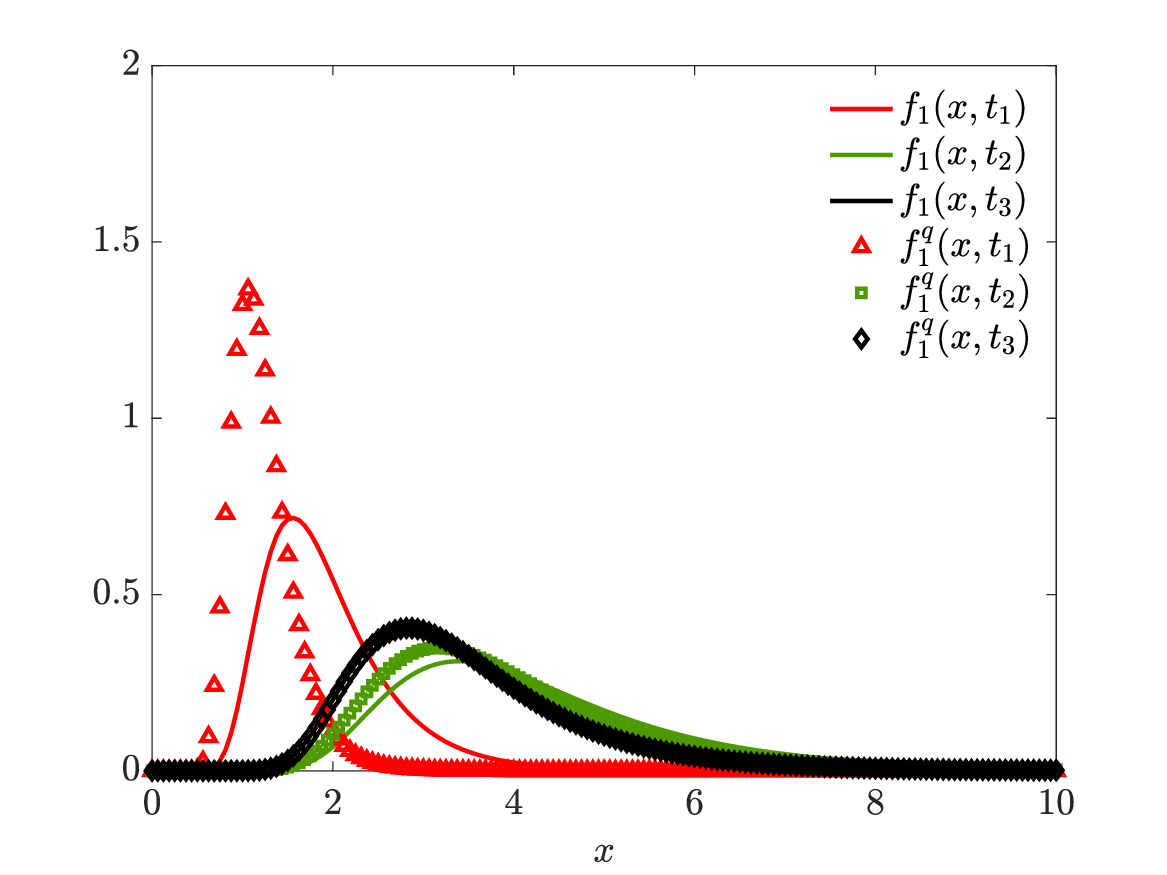}
\includegraphics[scale = 0.35]{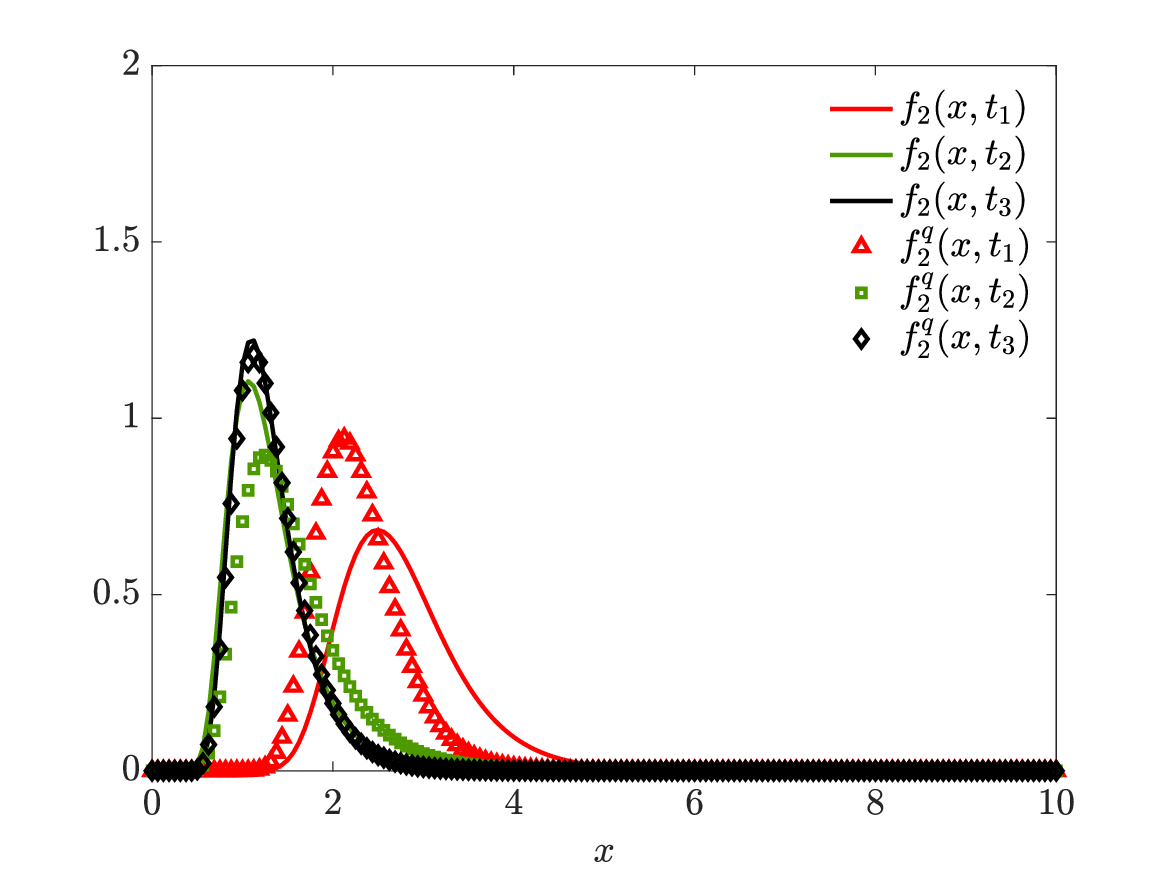}
\caption{We represent $f_1,f_2$ and $f_1^q,f_2^q$ for three time steps $t_1 = 1$, $t_2 = 10$, $t_3 = 20$. The approximation of $f_1,f_2$ is obtained with a 2nd order  semi-implicit SP scheme for the Fokker-Planck system of equations \eqref{intro:systFP} under the choice of parameters in \eqref{eq:values} and coefficients in Table \ref{Table:params}.    }
\label{fig:f_fq}
\end{figure}

\subsection{Test 2: convergence to the equilibrium density for $p \in [\frac 1 2 ,1]$}
In this section we concentrate the convergence to equilibrium obtained in Section \ref{sec:equi} for several choices of the diffusion strength $x^{2p}$ and $p \in [\frac 1 2, 1]$ for which we obtained asymptotic convergence to the stationary profiles $f_1^\infty,f_2^\infty$ in the Cramér distance $\mathcal E_1$. We consider the evolution of $\mathcal E_1(f_k,f_k^\infty)$, $k=1,2$, for the two interacting densities $f_1,f_2$. In view of the $L^2$ regularity of the equilibrium densities, from the obtained bound, we have an asymptotic exponential convergence to equilibrium provided by the lower bounds on the coefficients $\lambda_k(t)$, $k = 1,2$. 

In Figure \ref{fig:equi} we represent the obtained trends. The approximation of $f_1,f_2$ have been obtained through a 6th order semi-implicit SP scheme for the Fokker-Planck system \eqref{intro:systFP} over the domain $[0,50]$ discretized with $1001$ gridpoints and a time step $\Delta t = \Delta x/2$. {We observe that the computed rate is suboptimal and apparently the system of equations converge faster than the obtained exponential rate. }

\begin{figure}
\centering
\includegraphics[scale = 0.35]{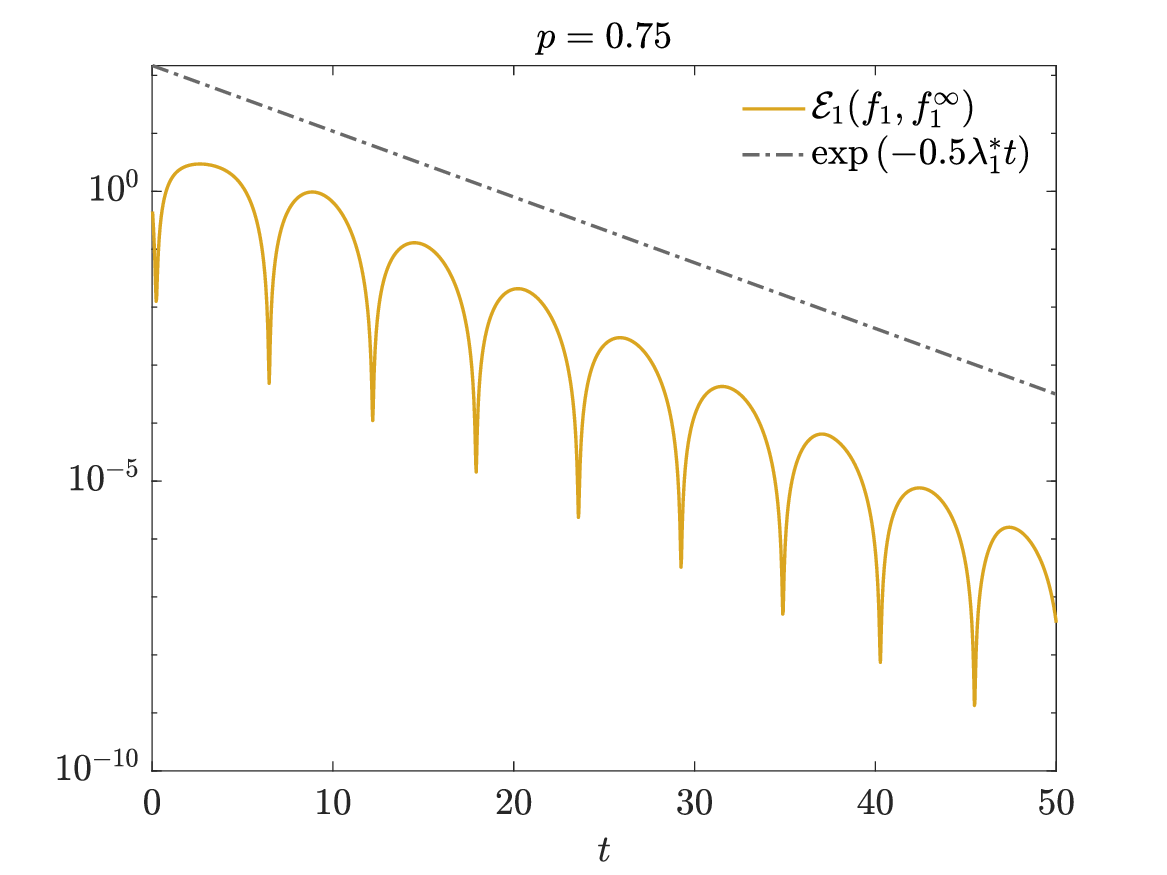}
\includegraphics[scale = 0.35]{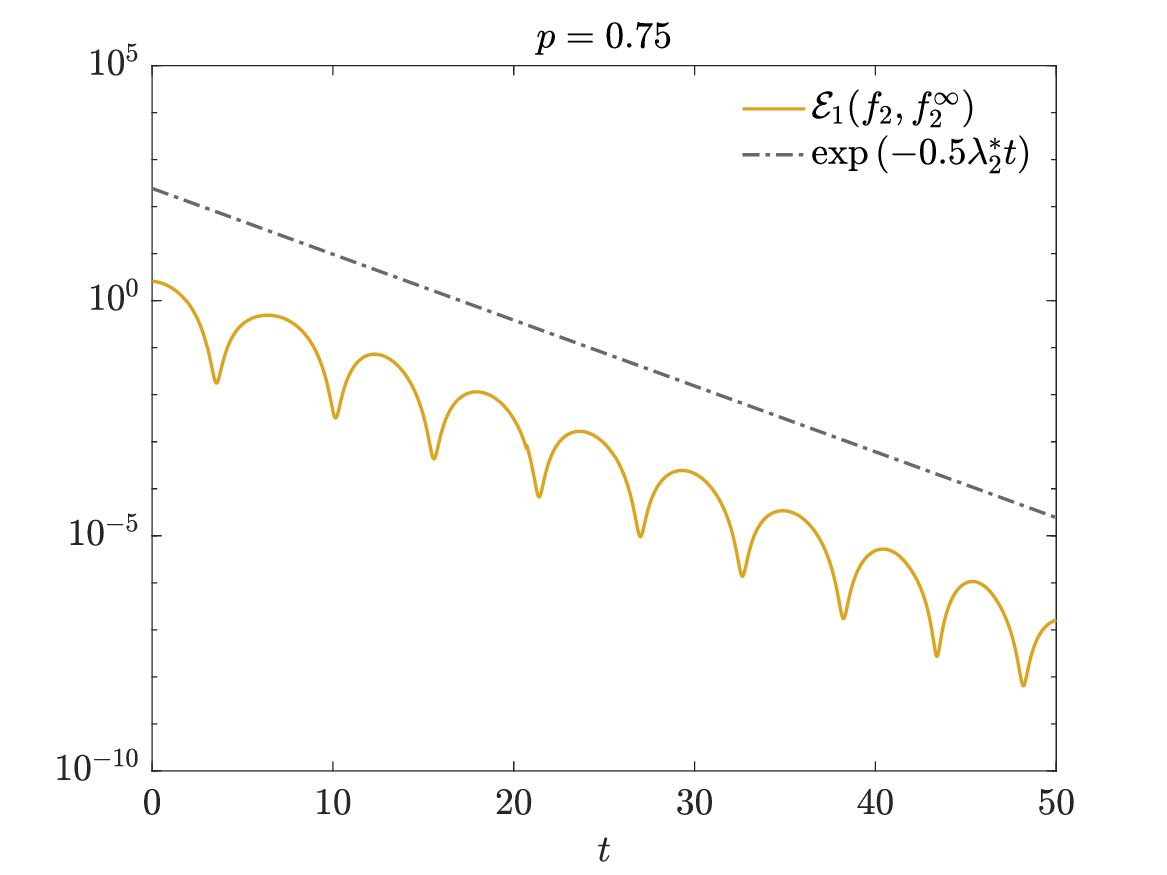}
\caption{Evolution of $\mathcal E_1(f_k,f_k^\infty)$, $k=1,2$, and of the obtained exponential trend to equilibrium.}
\label{fig:equi}
\end{figure}

\section*{Conclusion}
Our analysis establishes a rigorous framework connecting kinetic descriptions of interacting populations with their macroscopic Lotka-Volterra counterparts. Through the introduction of Energy-type distances, we demonstrated the dissipative structure of the considered dynamics, which drives exponentially in time the system toward equilibrium. Thanks to the obtained bounds we quantified the rate of convergence in suitable homogeneous Sobolev spaces and we also clarified the underlying energy dissipation mechanism governing the dynamics. Overall, the proposed approach bridges kinetic modelling with Lotka-Volterra-type dynamics establishing a new perspectives in the derivation of equilibration rates in large interacting systems. {More generally, we expect that the developed techniques can be fruitfully applied to a vast class of modelling settings. }

{
\section*{Data Accessibility}
All the sources and data of the manuscript have been provided in the following repository https://doi.org/10.5281/zenodo.18756863 
}
\section*{Acknowledgements}
M.Z. acknowledges partial support by PRIN2022PNRR project No. P2022Z7ZAJ, European Union - NextGenerationEU and by ICSC – Centro Nazionale di Ricerca in High Performance Computing, Big Data and Quantum Computing, funded by European Union – NextGenerationEU.

\end{document}